 \newtheorem{theorem}{Theorem}[section]
 \newtheorem{lemma}[theorem]{Lemma}
 \newtheorem{proposition}[theorem]{Proposition}
 \theoremstyle{definition}
  \newtheorem*{ack}{Acknowledgments}
 \theoremstyle{remark}
 \newtheorem{remark}{Remark}[section]
\numberwithin{equation}{section}
\begin{document}
\title{Geometric inequalities involving three quantities in warped product manifolds}
\author[K.-K. Kwong]{Kwok-Kun Kwong}
\address{Mathematical Sciences Institute, Australian National University, Canberra, ACT 2601, Australia}
\email{\href{mailto:kwok-kun.kwong@anu.edu.au}{kwok-kun.kwong@anu.edu.au}}

\author[Y. Wei]{Yong Wei}
\address{School of Mathematical Sciences, University of Science and Technology of China, Hefei 230026, P.R. China}
\email{\href{mailto:yongwei@ustc.edu.cn}{yongwei@ustc.edu.cn}}
%
\keywords {Weinstock inequality, geometric inequalities, inverse curvature flows, warped product manifolds}

\begin{abstract}
In this paper, we establish two families of sharp geometric inequalities for closed hypersurfaces in space forms or other warped product manifolds. Both families of inequalities compare three distinct geometric quantities. The first family concerns the $k$-th boundary momentum, area, and weighted volume, and has applications to Weinstock-type inequalities for Steklov or Wentzell eigenvalues on star-shaped mean convex domains. This generalizes the main results of \cite{Weins21}. The second family involves a weighted $k$-th mean curvature integral and two distinct quermassintegrals and extends the authors' recent work \cite{kwong2022inverse} with G. Wheeler and V.-M. Wheeler.
\end{abstract}

\maketitle
\tableofcontents

\section{Introduction}

In the first part of this paper, we investigate weighted geometric inequalities of the form (or a slight variation of it)
\begin{equation}\label{ineq eg1}
\int_{\Sigma} \lambda^k d \mu \geq \frac{n}{n+k} \omega_n^{-\frac{k}{n}}|\Sigma|^{\frac{n+k}{n}}+k \int_{\Omega} \lambda^{k-1} \lambda^{\prime} d v,
\end{equation}
where $\Sigma=\partial \Omega$ is the boundary of a smooth domain and $\omega_n=|\mathbb S^n|$ is the area of unit sphere $\mathbb{S}^n$. We consider these inequalities in a broad class of warped product manifolds $\overline{M}^{n+1}=[a, b) \times N^n$, endowed with the Riemannian metric $\overline{g}=d r^2+\lambda(r)^2 g_N$, where $g_N$ is a Riemannian metric on the base manifold $N$. The precise assumptions on $\overline {M}^{n+1}$ and $\Sigma$ will be specified in the theorems that follow.

Let us give some motivation for considering weighted integrals such as $\int_{\Sigma} \lambda(r)^k d \mu$ and $\int_{\Omega} \lambda^{k-1} \lambda^{\prime} d v$. To better understand their significance, we consider the case in $\mathbb R^{n+1}$. In this case, these integrals correspond to the weighted perimeter and volume, where the weights are powers of the distance $r=|x|$ from the origin. These weighted inequalities are commonly known as weighted isoperimetric problems in $\mathbb R^{n+1}$, or more broadly, as isoperimetric problems in manifolds with density \cite{rosales2008isoperimetric}. See for instance \cite{betta1999weighted, betta2008weighted, cabre2013Sobolev}. These problems have been the focus of extensive research over the last few decades. One particularly interesting example of these problems is the isoperimetric problem with Gaussian density $\exp \left(-c r^2\right)$ \cite{borell1975brunn}, which has applications in probability and statistics. Another noteworthy example is the isoperimetric problem with radial density, where the weight is some power of the distance $r$. In the case where the power is two, the weighted integral is referred to as the polar moment of inertia, an important quantity in Newtonian physics. Mathematically, there has also been extensive investigation into weighted Caffarelli-Kohn-Nirenberg interpolation inequalities \cite{caffarelli1984first, catrina2001caffarelli}, weighted Sobolev inequalities \cite{cabre2013Sobolev} and weighted isoperimetric inequalities \cite{alvino2017some,canete2010some}, where the integrals are weighted by a power of $r$.

Often, these Sobolev or isoperimetric inequalities compare two weighted integrals or norms. For example, in \cite{alvino2017some}, the problem of minimizing $\int_{\partial \Omega}r^k d\mu$ subject to the constraint $\int_{\Omega}r^l d v=1$ for smooth domain $\Omega\subset \mathbb R^{n+1}$ is considered, resulting in some isoperimetric inequalities of type $\int_{\partial \Omega}r^k d\mu\ge C_{k, l, n} \left(\int_{\Omega}r^l d v\right)^{\frac{ k+n }{ l+n+1 }}$ for a range of $k, l$ and $n$. Another similar but much simpler inequality is
\begin{equation}\label{rv}
\int_{\Sigma} r d \mu\ge (n+1) |\Omega|,
\end{equation}
which is easily obtained by the divergence theorem. In \cite{girao2020weighted}, Gir\~{a}o and Rodrigues showed that although the conjectured inequality $\int_{\Sigma} r d \mu \ge \omega_{n}\left(\frac{|\Sigma|}{\omega_{n}}\right)^{\frac{n+1}{n}}$ is false even for star-shaped mean convex surfaces (which, if true, would improve \eqref{rv} by the isoperimetric inequality), the inequality \eqref{rv} can still be improved by the following weighted geometric inequality involving three terms, when the hypersurface $\Sigma$ is
star-shaped and strictly mean convex:
\begin{equation}\label{s1.girao}
\int_{\Sigma} r d \mu\ge \frac{n}{n+1} \omega_{n}\left(\frac{|\Sigma|}{\omega_{n}}\right)^{\frac{n+1}{n}}+ |\Omega|.
\end{equation}
This is an improvement due to the isoperimetric inequality, and is a special case of our family of inequalities \eqref{ineq eg1} when $k=1$ and $\lambda(r)=r$. It turns out that a number of weighted geometric inequalities comparing two quantities can be improved by introducing another term (such as $|\Sigma|^\alpha$ for some $\alpha$) into the inequalities, but at the same time it cannot be improved by dropping any one of these terms. This is one of the reasons why we consider three-term inequalities such as \eqref{ineq eg1} in this paper.

We would like to give another motivation for considering inequalities involving three quantities, and why it can be a challenging problem. In \cite{Weins21}, the following isoperimetric inequality is proved for convex set $\Omega\subset \mathbb R^{n+1}$:
\begin{equation}\label{iso.intro}
\int_{\partial \Omega}r^2 d \mu \geq b_{n+1}^{-\frac{2}{n+1}}|\partial \Omega \| \Omega|^{\frac{2}{n+1}}.
\end{equation}
Here $b_{n+1}=|\mathbb B^{n+1}|$ is the volume of the unit ball in $\mathbb{R}^{n+1}$. This in turn is used to prove some Weinstock type inequalities (\eqref{s2.weins1}, \eqref{wentzell}) for the first nonzero Steklov and Wenzell eigenvalue for convex domains in $\mathbb{R}^{n+1}$, generalizing the inequality \eqref{weinstock} of Weinstock \cite{Weins54,Weins54-2}. The inequality \eqref{iso.intro} is not true for arbitrary domain (e.g. for a smooth domain near the origin, with a lot of boundary area).
Among sets with prescribed volume, both the boundary momentum $\int_{\partial \Omega}r^2d\mu$ and the surface area $|\partial \Omega|$ are minimized by balls. Therefore, the competition of these two terms poses a challenge. The inequality \eqref{iso.intro} states the minimum of the ratio of these two quantities is minimized by balls, which is far from obvious. It is worth noting that the inequality \eqref{iso.intro} can be deduced from \eqref{s1.girao} using elementary methods. See Section \ref{sec weinstock} for the details. This not only extends the results of \cite{Weins21} to a more general class of domains, but also provides a simpler method, in our opinion.

The main ingredient in proving the kind of inequalities such as \eqref{ineq eg1} is the inverse mean curvature flow (IMCF).
There is already a number of geometric inequalities which can be obtained by finding a monotone quantities along the IMCF and investigating the asymptotic of the (rescaled) limit. However, most of the inequalities obtained this way consist of only two geometric quantities. There are some exceptions, see for example \cite{BHT2016, girao2020weighted, kwong2022inverse}, but the results are rather isolated. An idea to obtain a three-term inequality is to obtain a monotone quantity $Q(t)$ along IMCF consisting of two geometric quantities. In Section \ref{sec.monot}, we are going to find a family of such quantities $Q(t)=Q_k(t)$ for each $k\in \mathbb R$. This will enable us to systematically find the one-parameter family \eqref{ineq eg1} of inequalities involving three geometric quantities, and the proof will be given in Section \ref{sec ineq k boundary moment}.

In the second part of this paper, we investigate three-term geometric inequalities involving weighted curvature integrals and quermassintegrals. The quermassintegrals $W_k(\Omega)$ are fundamental objects in convex geometry and play a central role in the study of convex sets. Section \ref{sec quermassintegrals} will provide the precise definition of quermassintegrals, but to give some examples, we have $W_0(\Omega)=|\Omega|$, $W_1(\Omega)=\frac{1}{n}|\partial \Omega|$, and $W_{n+1}(\Omega)=\left|\mathbb{B}^{n+1}(1)\right|$ for smooth domains in space forms. The literature contains numerous inequalities that compare various quermassintegrals, such as the Brunn-Minkowski and Alexandrov-Fenchel inequalities, cf. \cite{Schneider2014} and references therein. Historically, these inequalities were first proved using various analytic methods. However, it has been relatively recent that mathematicians have found a way to prove these inequalities using geometric flows, particularly inverse curvature type flows, at least when $\Omega$ is smooth enough. See for example \cite{ACW21,AHL21,BGL,BHT2016,GeWW14,GL09,GL15,delima16,Chen-Sun22,Wx14} and the references therein. As explained in the first part, the key to proving these inequalities is often finding a monotone quantity along the flow and investigating its asymptotic behavior. However, inequalities proved using this method typically only compare two geometric quantities. This is because the flow is often designed to keep one quantity constant while making the other quantity monotone along the flow. Deriving inequalities that involve more than two quermassintegrals using geometric flow is often challenging. For instance, to the best of our knowledge, it is unknown whether the inequality $W_k(\Omega)^2\ge c_{k,n} W_{k-1}(\Omega) W_{k+1}(\Omega)$ \cite[(7.66)]{Schneider2014} for smooth domains in $\mathbb R^{n+1}$ with enough convexity can be obtained using geometric flow methods.

Section \ref{sec quermassintegrals} is an attempt to remedy this situation by constructing a family of three-term geometric inequalities involving weighted curvature integrals and quermassintegrals. This idea was inspired by the authors' recent work \cite{kwong2022inverse} with G. Wheeler and V.-M. Wheeler, where they proved that for a smooth, closed and convex curve $\gamma$ in $\overline{M}^2=\mathbb{H}^2, \mathbb{S}_{+}^2$ (open hemisphere) or $\mathbb{R}^2$, it holds that
\begin{equation}\label{kwww}
\int_\gamma \Phi(r) \kappa d s \geq \frac{1}{2 \pi}\left(L^2-2 \pi A\right)
\end{equation}
where
$
\Phi(r)=
\begin{cases}
1-\cos r, & K=1\\
\cosh r-1, & K=-1 \\
\frac{r^2}{2}, & K=0
\end{cases}
$ and $K$ is the ambient curvature. Here $L$ is the length of $\gamma=\partial \Omega$ and $A$ is the area enclosed by it. This inequality provides a better comparison than the two-term inequalities presented in \cite[Theorem 1.6]{girao2020weighted} and \cite[Theorem 2]{kwong2014new}, where $\int_\gamma \Phi(r) \kappa d s \ge \frac{L^2}{4 \pi}$ and $\int_\gamma \Phi(r) \kappa d s \ge A$, respectively. The function $\Phi$ appears naturally in this inequality due to the conformal symmetry of the ambient space $\overline{M}^2$. Specifically, the gradient of $\Phi$ is a conformal Killing vector field. This property allows us to derive various geometric inequalities involving both the weighted curvature integrals and quermassintegrals. This result turns out to have a number of applications, including a counterexample to the $n=2$ case of a conjecture of Gir\~{a}o-Pinheiro \cite{girao17}. Notice that \eqref{kwww} is a three-term inequality involving $W_0(\Omega)$, $W_1(\Omega)$ and the weighted curvature integral $\int_\gamma \Phi(r) \kappa d s$.

In Section \ref{sec quermassintegrals}, we extend this result to higher dimensional space forms under various convexity assumptions of the domain. For technical reasons, we utilize three slightly different but closely related inverse curvature type flows to derive the result in the Euclidean, hyperbolic, and spherical space forms respectively. In each case, we obtain a family of monotone quantities that involve both the weighted curvature integral $\int_{\partial \Omega_t} \Phi E_k$ and the quermassintegral $W_{k-1}\left(\Omega_t\right)$, where $E_k$ is the normalized $k$-th mean curvature. As an example of the kind of result we prove, consider a smooth, closed, star-shaped, and $k$-convex hypersurface $\Sigma$ in $\mathbb{R}^{n+1}$ with $n \geq 2$, enclosing a bounded domain $\Omega$. Then, for all $k=1, \cdots, n$, the following inequality holds:
\begin{equation}\label{ineq eg2}
\int_{\Sigma} \Phi E_k d \mu+k W_{k-1}(\Omega) \geq \frac{n+2+k}{2(n+2-k)} \omega_n\left(\frac{n+1-k}{\omega_n}\right)^{\frac{n+2-k}{n+1-k}} W_k(\Omega)^{\frac{n+2-k}{n+1-k}},
\end{equation}
where $\Phi(r)=r^2 / 2$. Equality holds if and only if $\Sigma$ is a coordinate sphere.

The organization of this paper is as follows. In Section \ref{sec weinstock}, we present a method for generalizing the main results of \cite{Weins21} to the more general class of star-shaped mean convex domains, that in our opinion is more straightforward. In Section \ref{sec.monot}, we introduce the inverse mean curvature flow and establish the monotonicity of a crucial quantity along this flow. This quantity will then be used in Section \ref{sec ineq k boundary moment} to prove an inequality of the type \eqref{ineq eg1} in various warped product spaces. In Section \ref{sec quermassintegrals}, we begin by introducing the quermassintegrals and the $k$-th mean curvatures, and then proceed to establish an inequality similar to \eqref{ineq eg2} in the three space forms. Our derivation of these inequalities will involve establishing the monotonicity of a geometric quantity that includes $W_{k-1}\left(\Omega_t\right)$ and $\int_{\partial \Omega_t} \Phi E_k$ along an associated inverse curvature type flow.

\begin{ack}
Kwok-Kun Kwong was supported by grant FL150100126 of the Australian Research Council. Yong Wei was surpported by National Key Research and Development Program of China 2021YFA1001800 and 2020YFA0713100, and Research grant KY0010000052 from University of Science and Technology of China.
\end{ack}

\section{Weinstock inequality for star-shaped mean convex domain}\label{sec weinstock}

In this section, we provide a simpler approach for extending the main results of \cite{Weins21} to a broader class of star-shaped mean convex domains.

Let us first revisit the Weinstock inequality for the first nonzero Steklov eigenvalue in $\mathbb{R}^{n+1}$. Let $\Omega\subset \mathbb{R}^{n+1}$ be a bounded domain.
The Steklov eigenvalue problem on $\Omega$ is
$$
\begin{cases}\overline \Delta u=0 & \text { in } \Omega \\ \partial_\nu u=\sigma u & \text { on } \partial \Omega\end{cases}
$$
where $\overline \Delta$ is the Laplacian operator acting on functions on $\Omega$, and $\nu$ is the outward normal along the boundary $\partial \Omega$.

The first non-zero Steklov eigenvalue of $\Omega$ can be characterized by
\begin{equation}\label{s2.stek}
  \sigma(\Omega)=\min\left\{\dfrac{\int_\Omega|\overline \nabla u|^2dv}{\int_{\partial\Omega}u^2d\mu}:\quad u\in H^1(\Omega)\setminus\{0\},~\int_{\partial\Omega}u d\mu=0\right\}.
\end{equation}
Weinstock \cite{Weins54,Weins54-2} proved that if $\Omega\subset \mathbb R^2$ is simply connected, then
\begin{equation}\label{weinstock}
\sigma(\Omega)|\partial\Omega|\leq \sigma(\mathbb{B}^2)|\partial \mathbb{B}^2|,
\end{equation}
where $\mathbb{B}^2\subset \mathbb R^2$ is a ball. For higher dimension, Bucur, Ferone, Nitsch and Trombetti \cite{Weins21} prove that for bounded convex domain $\Omega\subset \mathbb{R}^{n+1}$, there holds
\begin{equation}\label{s2.weins}
\sigma(\Omega)|\partial\Omega|^{\frac{1}{n}}\leq \sigma(\mathbb{B}^{n+1})|\partial \mathbb{B}^{n+1}|^{\frac{1}{n}}.
\end{equation}
Equality holds if and only if $\Omega$ is a ball. The key ingredient in the proof of \eqref{s2.weins} is the following sharp isoperimetric type inequality
\begin{equation}\label{s2.iso}
\int_{\partial\Omega}r^2d\mu\geq {b_{n+1}} ^{-\frac{2}{n+1}}|\partial\Omega||\Omega|^{\frac{2}{n+1}}
\end{equation}
for convex domain $\Omega\subset \mathbb{R}^{n+1}$, where $r^2=|x|^2$ and $b_{n+1}=|\mathbb{B}^{n+1}|$ is the volume of the unit ball in $\mathbb R^{n+1}$. Equality holds in \eqref{s2.iso} if and only if $\Omega$ is a ball centered at the origin.

In \cite{Weins21}, the authors conjectured that \eqref{s2.iso}, and hence the Weinstock inequality \eqref{s2.weins}, holds for star-shaped mean convex domains. In this section, we confirm this conjecture by demonstrating that \eqref{s2.iso} holds for these types of domains. This can be accomplished by using the inequality \eqref{s1.girao} for star-shaped mean convex domain $\Omega\subset \mathbb{R}^{n+1}$, which also establishes that the Weinstock inequality \eqref{s2.weins} applies to such domains.

\begin{theorem}\label{thm weinstock}
Let $\Omega$ be a smooth, bounded domain in $\mathbb{R}^{n+1}$ with star-shaped mean convex boundary $\partial\Omega$. Then the inequality
\begin{equation}\label{s2.weins1}
\sigma(\Omega)|\partial\Omega|^{\frac{1}{n}}\leq \sigma(\mathbb{B}^{n+1})|\partial \mathbb{B}^{n+1}|^{\frac{1}{n}}
\end{equation}
holds, and equality holds if and only if $\Omega$ is a ball.

\end{theorem}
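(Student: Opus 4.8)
The plan is to deduce the sharp isoperimetric-type inequality \eqref{s2.iso} for star-shaped mean convex domains directly from the three-term inequality \eqref{s1.girao}, and then to feed \eqref{s2.iso} into the classical Weinstock-type variational argument for the first nonzero Steklov eigenvalue. For the first step, the Cauchy--Schwarz inequality gives $\big(\int_{\partial\Omega} r\, d\mu\big)^{2} \le |\partial\Omega| \int_{\partial\Omega} r^{2}\, d\mu$, so after inserting \eqref{s1.girao} it suffices to verify that $\frac{n}{n+1}\omega_{n}\big(\frac{|\partial\Omega|}{\omega_{n}}\big)^{\frac{n+1}{n}} + |\Omega| \ge b_{n+1}^{-\frac{1}{n+1}}\, |\partial\Omega|\,|\Omega|^{\frac{1}{n+1}}$; this is exactly Young's inequality $\frac{n}{n+1}a + \frac{1}{n+1}b \ge a^{\frac{n}{n+1}} b^{\frac{1}{n+1}}$ with $a = \omega_{n}\big(\frac{|\partial\Omega|}{\omega_{n}}\big)^{\frac{n+1}{n}}$ and $b = (n+1)|\Omega|$, using $\omega_{n} = (n+1) b_{n+1}$. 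Squaring and dividing by $|\partial\Omega|$ then gives \eqref{s2.iso}; equality forces $r$ to be constant on $\partial\Omega$ (from Cauchy--Schwarz) and $a = b$ (from Young), which together with the rigidity in \eqref{s1.girao} forces $\Omega$ to be a ball centered at the origin.

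For the second step, after a translation we may assume that the center of mass $\frac{1}{|\partial\Omega|}\int_{\partial\Omega} x\, d\mu$ of $\partial\Omega$ is the origin, and we use that $\Omega$ is star-shaped with respect to this point. Then each coordinate function $x_{i}$ satisfies $\int_{\partial\Omega} x_{i}\, d\mu = 0$ and is therefore admissible in the characterization \eqref{s2.stek}; since $\overline\Delta x_{i} = 0$ and $|\overline\nabla x_{i}| = 1$, we get $\sigma(\Omega) \int_{\partial\Omega} x_{i}^{2}\, d\mu \le \int_{\Omega} |\overline\nabla x_{i}|^{2}\, dv = |\Omega|$. Summing over $i = 1, \dots, n+1$ and using $\sum_{i} x_{i}^{2} = r^{2}$ yields $\sigma(\Omega) \int_{\partial\Omega} r^{2}\, d\mu \le (n+1)|\Omega|$.

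Combining this with \eqref{s2.iso} gives $\sigma(\Omega)\, |\partial\Omega|\, |\Omega|^{\frac{2}{n+1}} \le (n+1) b_{n+1}^{\frac{2}{n+1}}\, |\Omega|$. Multiplying by $|\partial\Omega|^{\frac{1}{n}-1}$ and then using the Euclidean isoperimetric inequality $|\partial\Omega| \ge (n+1) b_{n+1}^{\frac{1}{n+1}}\, |\Omega|^{\frac{n}{n+1}}$ to remove $|\Omega|$ and $|\partial\Omega|$ from the right-hand side (the exponents cancel exactly) produces $\sigma(\Omega)\, |\partial\Omega|^{\frac{1}{n}} \le \big((n+1) b_{n+1}\big)^{\frac{1}{n}} = \omega_{n}^{\frac{1}{n}}$, which equals $\sigma(\mathbb{B}^{n+1})\, |\partial\mathbb{B}^{n+1}|^{\frac{1}{n}}$ since $\sigma(\mathbb{B}^{n+1}) = 1$ (with eigenfunctions $x_{i}$) and $|\partial\mathbb{B}^{n+1}| = \omega_{n}$. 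For the equality case: if equality holds in \eqref{s2.weins1} then equality holds in \eqref{s2.iso} and in the isoperimetric inequality, so $\Omega$ is a ball; conversely, on a ball the coordinate functions are precisely the first Steklov eigenfunctions, so equality is attained.

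The main obstacle is the second step, namely the interplay between the balancing of test functions and the hypothesis under which \eqref{s2.iso} is available: the coordinate functions are admissible only once the origin is placed at the center of mass of $\partial\Omega$, while \eqref{s2.iso} was established for domains star-shaped with respect to the origin. Hence the argument needs $\Omega$ to be star-shaped with respect to the center of mass of $\partial\Omega$ --- automatic when $\Omega$ is convex, and the precise form of the star-shapedness assumption that must be invoked in general. Once \eqref{s1.girao} is granted, the remainder of the proof is elementary.
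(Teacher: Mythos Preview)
Your proof is essentially identical to the paper's: both derive \eqref{s2.iso} from \eqref{s1.girao} via Cauchy--Schwarz and Young's inequality, then feed it into the coordinate-function test for $\sigma(\Omega)$ and finish with the classical isoperimetric inequality. The subtlety you flag at the end---that the translation placing the origin at the barycenter of $\partial\Omega$ must preserve the star-shapedness hypothesis needed for \eqref{s1.girao}---is real and is glossed over in the paper's proof as well.
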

\proof
Firstly, using the H\"{o}lder inequality, we have
\begin{equation}\label{s2.pf1}
\left(\int_{\partial\Omega}rd\mu\right)^2\leq |\partial\Omega|\int_{\partial\Omega}r^2d\mu.
\end{equation}
Secondly, using the Young's inequality, we have
\begin{equation}\label{s2.pf2}
  ((n+1)|\Omega|)^{\frac{1}{n+1}}|\partial\Omega|\omega_n^{-\frac{1}{n+1}}\leq |\Omega|+\frac{n}{n+1}\omega_n^{-\frac{1}{n}}|\partial\Omega|^{\frac{n+1}{n}}
\end{equation}
Combining \eqref{s2.pf1}, \eqref{s2.pf2} and applying \eqref{s1.girao} imply that
\begin{equation}\label{s2.pf3}
  \int_{\partial\Omega}r^2d\mu\geq \left( {\frac{\omega_n}{n+1}} \right)^{-\frac{2}{n+1}}|\partial\Omega||\Omega|^{\frac{2}{n+1}}= {b_{n+1}} ^{-\frac{2}{n+1}}|\partial\Omega||\Omega|^{\frac{2}{n+1}},
\end{equation}
which holds for star-shaped and mean convex domain $\Omega$ in $\mathbb{R}^{n+1}$. Here we used the fact that $b_{n+1}=\frac{\omega_n}{n+1}$.

Applying \eqref{s2.pf3} and following the proof of Theorem 3.1 in \cite{Weins21}, we see that the Weinstock inequality \eqref{s2.weins1} also holds for star-shaped and mean convex domain $\Omega$ in $\mathbb{R}^{n+1}$. We include the proof here for the convenience of readers. We can assume that $\partial\Omega$ has the origin as barycenter. Choosing the coordinate functions $x_i, i=1,\cdots,n+1$ as test functions in \eqref{s2.stek}, we have
\begin{equation*}
  \sigma(\Omega)\int_{\partial\Omega}x_i^2d\mu\leq \int_\Omega |\overline \nabla x_i|^2dv,\quad i=1,\cdots,n+1.
\end{equation*}
Summing up for $i=1,\cdots,n+1$ and noting that $|\overline \nabla x_i|^2=1$, we obtain
\begin{equation}\label{s2.pf4}
  \sigma(\Omega)\int_{\partial\Omega}r^2d\mu\leq (n+1)|\Omega|.
\end{equation}
Substituting \eqref{s2.pf3} into \eqref{s2.pf4} implies that
\begin{equation}\label{s2.weins2}
  \sigma(\Omega)\frac{|\partial \Omega|}{|\Omega|^{\frac{n-1}{n+1}}}\leq b_{n+1}^{\frac{2}{n+1}}=\sigma(\mathbb{B}^{n+1}) \frac{|\partial \mathbb{B}^{n+1}|}{|\mathbb{B}^{n+1}|^{\frac{n-1}{n+1}}}.
\end{equation}
Applying the classical isoperimetric inequality to \eqref{s2.weins2}, we conclude the equality \eqref{s2.weins1}.
\endproof

\begin{remark}
In fact, the proof above provides a stronger inequality \eqref{s2.weins2} for smooth, bounded domain $\Omega$ in $\mathbb{R}^{n+1}$ with star-shaped mean convex boundary $\partial\Omega$. The inequality \eqref{s2.weins1} means that
\begin{equation*}
  \sigma(\Omega)\leq \sigma(\Omega^*)
\end{equation*}
where $\Omega^*$ is a ball such that $|\partial\Omega|=|\partial\Omega^*|$. Equality holds if and only if $\Omega$ is a ball.
\end{remark}

Using a similar idea, the inequality \eqref{s1.girao} can also be applied to study the isoperimetric inequality for the Wentzell eigenvalue, which is a generalization of the Steklov eigenvalue. Recall that the Wentzell eigenvalue problem is the problem
$$
\begin{cases} \overline \Delta u=0 & \text { in } \Omega \\
-\beta \Delta u+\partial_\nu u=\mu u & \text { on } \partial \Omega
\end{cases}
$$
where $\beta$ is a given real number, $\partial_\nu$ denotes the outward unit normal derivative, and $\Delta$ denotes the Laplacian on $\partial \Omega$. For the physical interpretation of this problem, see \cite{dambrine2016extremal}. Here, we only consider the case where $\beta\ge 0$. Similar to the Steklov eigenvalue, the first non-zero Wentzell eigenvalue $\mu(\Omega,\beta)$ satisfies
\begin{equation*}
\begin{aligned}
\mu(\Omega,\beta)=
\min \left\{\frac{\int_{\Omega}|\overline \nabla u|^2 dv+\beta \int_{\partial \Omega}\left|\nabla u\right|^2 d \mu}{\int_{\partial \Omega} u^2 d \mu}: u \in H^1(\Omega) \setminus\{0\}, \int_{\partial \Omega} u d \mu=0\right\},
\end{aligned}
\end{equation*}
where $\nabla$ is the tangential gradient along $\partial \Omega$.

In \cite[Theorem 3.2]{Weins21}, it was shown that among all open convex sets in $\mathbb{R}^n$ with the same volume, the functional $\mu(\Omega, \beta)$ is maximized by a ball. We can extend this result to the class of star-shaped mean convex domains using the same idea as in Theorem \ref{thm weinstock}. Notably, the inequality \eqref{s2.pf3} is again the crucial ingredient in the proof of \cite[Theorem 3.2]{Weins21}, and since convexity is not required elsewhere in the proof, we will omit the details and only state the result. The reader may refer to \cite{Weins21} for more details.
\begin{theorem}
Let $\Omega$ be a bounded, star-shaped mean convex domain of $\mathbb{R}^{n+1}$ and $\beta\ge 0$. Then
\begin{equation}\label{wentzell}
\mu(\Omega, \beta) \leq \mu(\Omega^{\sharp}, \beta),
\end{equation}
where $\Omega^{\sharp}$ is a ball such that $ |\Omega| = |\Omega^{\sharp}|$. The equality holds if and only if $\Omega$ is a ball.
\end{theorem}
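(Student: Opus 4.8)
The plan is to imitate the proof of Theorem~\ref{thm weinstock}, replacing the Steklov Rayleigh quotient by the Wentzell one; the extra tangential Dirichlet energy $\beta\int_{\partial\Omega}|\nabla u|^2\,d\mu$ is the only new feature, and it will produce exactly the term $\beta n/R^2$ that also occurs in the value of $\mu$ on the optimal ball, so it costs nothing once \eqref{s2.pf3} is available. \textbf{Step 1.} After a translation we may assume that the barycenter of $\partial\Omega$ is the origin (star-shapedness understood with respect to this point, so that \eqref{s2.pf3} still applies), hence $\int_{\partial\Omega}x_i\,d\mu=0$ and each coordinate function $x_i$ is admissible in the variational characterization of $\mu(\Omega,\beta)$. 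Testing with $u=x_i$, summing over $i=1,\dots,n+1$, and using $|\overline\nabla x_i|^2=1$ together with the pointwise identity $\sum_{i=1}^{n+1}|\nabla x_i|^2=\sum_{i=1}^{n+1}(1-\nu_i^2)=n$ (here $\nabla$ is the tangential gradient and $\nu$ the unit normal on $\partial\Omega$), we obtain
\[
\mu(\Omega,\beta)\int_{\partial\Omega}r^2\,d\mu\ \le\ (n+1)|\Omega|+\beta n|\partial\Omega|.
\]

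\textbf{Step 2.} Bounding the left-hand side from below by \eqref{s2.pf3} and writing $R=(|\Omega|/b_{n+1})^{1/(n+1)}$ for the radius of $\Omega^\sharp$, the volume contribution becomes $(n+1)b_{n+1}^{2/(n+1)}|\Omega|^{(n-1)/(n+1)}/|\partial\Omega|$ while the $\beta$-contribution is exactly $\beta n/R^2$. The classical isoperimetric inequality $|\partial\Omega|\ge (n+1)b_{n+1}^{1/(n+1)}|\Omega|^{n/(n+1)}$ bounds the first of these by $1/R$, so that $\mu(\Omega,\beta)\le 1/R+\beta n/R^2$.

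\textbf{Step 3.} It remains to identify the right-hand side as $\mu(\Omega^{\sharp},\beta)$. On $\Omega^{\sharp}=B_R$ a degree-$\ell$ spherical harmonic has harmonic extension $r^\ell Y_\ell$, with $\partial_\nu u=(\ell/R)u$ and $-\Delta u=(\ell(\ell+n-1)/R^2)u$ on $\partial B_R$; it is therefore a Wentzell eigenfunction with eigenvalue $\beta\ell(\ell+n-1)/R^2+\ell/R$, and (these being all the eigenvalues, via the Dirichlet-to-Neumann description) for $\beta\ge 0$ this is increasing in $\ell$, so the first nonzero one is $\mu(\Omega^{\sharp},\beta)=1/R+\beta n/R^2$ and \eqref{wentzell} follows. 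Equality in \eqref{wentzell} propagates back through the chain, forcing equality in \eqref{s2.pf3} (hence $\Omega$ a ball centered at the barycenter) and in the isoperimetric inequality; conversely equality is clear for balls.

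\textbf{Main difficulty.} Essentially everything above is bookkeeping once \eqref{s2.pf3} is in hand — indeed convexity entered only in establishing that weighted inequality. The one substantive input is the explicit Wentzell spectrum of a ball, in particular the fact that linear functions genuinely realize the first nonzero eigenvalue (rather than merely bounding it from above), which is \cite[Theorem 3.2]{Weins21} and may simply be quoted, exactly as its Steklov analogue was used in Theorem~\ref{thm weinstock}.
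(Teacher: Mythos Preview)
Your proposal is correct and follows exactly the approach the paper indicates: the paper does not write out a proof but simply observes that once \eqref{s2.pf3} is available for star-shaped mean convex domains, the argument of \cite[Theorem 3.2]{Weins21} goes through verbatim, and you have faithfully reconstructed that argument (testing with coordinates, summing, invoking \eqref{s2.pf3} and the isoperimetric inequality, and identifying $1/R+\beta n/R^2$ as $\mu(\Omega^\sharp,\beta)$). The one parenthetical assumption you flag --- that $\Omega$ is star-shaped with respect to the barycenter of $\partial\Omega$ so that \eqref{s2.pf3} applies with that origin --- is exactly the same implicit assumption the paper makes in its proof of Theorem~\ref{thm weinstock}, so you are consistent with the paper here.
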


\section{A monotone quantity along the IMCF in warped product manifolds}\label{sec.monot}
Assume that $N^n$ is a compact Riemannian manifold with metric $g_N$. We consider the warped product manifold $\overline{M}^{n+1}=[a,b)\times N^n$ equipped with the Riemannian metric
\begin{equation*}
\overline{g}=dr^2+\lambda(r)^2g_{N},
\end{equation*}
where $a\geq 0$, and $b$ is allowed to be $\infty$, $\lambda:[a,b)\to \mathbb{R}$ is a smooth function and is positive on $(a,b)$. We also allow that $\{a\}\times N$ degenerates to a point. As examples, the space forms can be viewed as warped product manifolds
$\overline{M}^{n+1}=I\times \mathbb{S}^n$ for an interval $I$.
\begin{equation}\label{s3.spaceforms}
\overline{M}^{n+1}=\left\{
\begin{aligned}
\mathbb{R}^{n+1},&\qquad \mathrm{if}~I=\mathbb{R}_\geq, \quad \lambda(r)=r, \\
\mathbb{H}^{n+1},&\qquad \mathrm{if}~I=\mathbb{R}_\geq, \quad \lambda(r)=\sinh r,\\
\mathbb{S}^{n+1},&\qquad \mathrm{if}~I=[0,\pi), \quad \lambda(r)=\sin r. \\
\end{aligned}
\right.
\end{equation}

Let $\Sigma$ be a closed, embedded orientable hypersurface in  $\overline{M}^{n+1}$. Then there are two cases: (i). $\Sigma$ is null-homologous and is the boundary of a bounded domain $\Omega$; (ii). $\Sigma $ is homologous to the boundary $\{a\}\times N^n$ and there is a bounded domain $\Omega$ with $\partial\Omega=\Sigma\cup (\{a\}\times N^n)$. To simplify the notation, we let $\Gamma=\emptyset$ in case (i) and let $\Gamma=\{a\}\times N^n$ in case (ii). Then in each case we have $\partial\Omega=\Sigma\cup \Gamma$. Let $\nu$ be the unit outward normal to $\Sigma$, and $\eta$ be the outward normal to $\Gamma$ when it is nonempty. We have
\begin{lemma}\label{s2.lem}
Let $\Sigma$ be a closed, embedded orientable hypersurface in $\overline{M}^{n+1}$. Then
\begin{equation}\label{s2.lemma1}
\int_{\Sigma}\lambda(r)^kd\mu\geq (n+k)\int_\Omega \lambda^{k-1}(r){\lambda}'(r)dv+\lambda^k(a)|\Gamma|,
\end{equation}
where $d\mu$ denotes the area form on $\Sigma$ and $dv$ denotes the volume form on $\overline{M}^{n+1}$.  The equality holds in \eqref{s2.lemma1} if and only if $\Sigma$ is a slice $\{r\}\times N$ for some $r\in (a,b)$.
\end{lemma}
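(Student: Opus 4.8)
The plan is to prove the inequality via the divergence theorem applied to a cleverly chosen vector field, namely the conformal-type field $X = \lambda(r)^k \, \partial_r$ (or a close variant). The motivation is that in the warped product $\overline{g} = dr^2 + \lambda(r)^2 g_N$, the vector field $\lambda(r)\partial_r$ is conformal Killing, $\overline{\nabla}(\lambda \partial_r) = \lambda' \,\overline{g}$, so its divergence is $(n+1)\lambda'$. For the weighted field, I would compute $\operatorname{div}_{\overline{M}}(\lambda^k \partial_r)$: using $\overline{\nabla}_{\partial_r}\partial_r = 0$ and $\overline{\nabla}_X \partial_r = \tfrac{\lambda'}{\lambda}(X - \langle X,\partial_r\rangle \partial_r)$ for $X$ tangent to the slices, one gets $\operatorname{div}(\lambda^k \partial_r) = k\lambda^{k-1}\lambda' + \lambda^k \cdot n\tfrac{\lambda'}{\lambda} = (n+k)\lambda^{k-1}\lambda'$. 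So integrating over $\Omega$,
\begin{equation*}
(n+k)\int_\Omega \lambda^{k-1}\lambda'\, dv = \int_{\partial\Omega}\langle \lambda^k\partial_r, \nu_{\partial\Omega}\rangle\, d\sigma = \int_\Sigma \lambda(r)^k \langle \partial_r, \nu\rangle\, d\mu - \lambda(a)^k |\Gamma|,
\end{equation*}
where the sign on the $\Gamma$-term comes from $\eta = -\partial_r$ being the outward normal of $\Omega$ along $\Gamma = \{a\}\times N$, and $\lambda^k$ is constant $= \lambda(a)^k$ there.

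The second step is to bound $\langle \partial_r, \nu\rangle \le 1$ pointwise on $\Sigma$, which is immediate since $\partial_r$ is a unit vector and $|\nu| = 1$. This yields
\begin{equation*}
\int_\Sigma \lambda(r)^k\, d\mu \ \ge\ \int_\Sigma \lambda(r)^k \langle \partial_r,\nu\rangle\, d\mu \ =\ (n+k)\int_\Omega \lambda^{k-1}\lambda'\, dv + \lambda(a)^k|\Gamma|,
\end{equation*}
which is exactly \eqref{s2.lemma1}. Here I should note that the hypothesis $\lambda \ge 0$ (positive on $(a,b)$) ensures $\lambda(r)^k \ge 0$, so the inequality $\langle \partial_r,\nu\rangle \le 1$ may be multiplied through; for $k$ not a positive integer one also needs $\lambda > 0$ along $\Sigma$, which holds since $\Sigma$ is a closed hypersurface lying in $(a,b)\times N$ (or one restricts to $r > a$ appropriately). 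No curvature or convexity assumption on $\Sigma$ is needed for this lemma — it is purely a first-order/divergence statement.

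For the equality case: equality forces $\langle \partial_r, \nu\rangle = 1$ at $\mu$-a.e. point of $\Sigma$ where $\lambda(r)^k > 0$, hence everywhere on $\Sigma$ by continuity. But $\langle \partial_r,\nu\rangle = 1$ means $\nu = \partial_r$ identically, i.e. the normal of $\Sigma$ is everywhere radial; equivalently $r$ is constant along $\Sigma$ (its tangential gradient vanishes), so $\Sigma$ is contained in a slice $\{r_0\}\times N$. Since $\Sigma$ is a closed embedded hypersurface and the slice $\{r_0\}\times N$ is itself a closed connected hypersurface (as $N$ is compact), $\Sigma = \{r_0\}\times N$. Conversely, on a slice $\partial_r = \nu$ and a direct computation (or the divergence identity above with equality) shows \eqref{s2.lemma1} is an equality.

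I do not anticipate a serious obstacle here; the only points requiring a little care are the correct sign bookkeeping on the inner boundary $\Gamma$ (orientation of $\eta = -\partial_r$) and making sure the pointwise inequality $\lambda^k\langle\partial_r,\nu\rangle \le \lambda^k$ is legitimate, i.e. that $\lambda(r) > 0$ on $\Sigma$ so that no sign reversal occurs when $k$ is a non-integer real — this follows from $\Sigma$ being a compact subset of the open warped region. The divergence computation itself is the genuine content, and it is routine given the warped product structure.
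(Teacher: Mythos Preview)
Your proposal is correct and follows essentially the same approach as the paper: apply the divergence theorem to the vector field $\lambda(r)^k\partial_r$, compute $\overline{\mathrm{div}}(\lambda^k\partial_r)=(n+k)\lambda^{k-1}\lambda'$, handle the boundary term on $\Gamma$ via $\eta=-\partial_r$, and conclude using $\langle\partial_r,\nu\rangle\le 1$ with the equality characterization. Your extra remarks on the sign of $\lambda^k$ for non-integer $k$ are more careful than the paper's write-up, but the underlying argument is identical.
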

\proof
Recall that the vector field $V=\lambda(r)\partial_r$ is a conformal Killing field (see \cite[Lemma 2.2]{Br13}) and satisfies
\begin{equation*}
\langle \overline \nabla_{e_i}V,e_j\rangle ={\lambda}'(r)\overline{g}_{ij},
\end{equation*}
where ${\lambda}'(r)$ denotes the derivative of $\lambda(r)$ with respect to $r$. Let $Y=\lambda^k(r)\partial_r$. For each $p\in \overline{M}^{n+1}$, choose an orthonormal frame $\{e_1, e_2,\cdots, e_{n+1}\}$ around $p$ such that $e_1=\partial_r$. Then
\begin{align*}
\overline {\mathrm{div}}(Y) =& \sum_{i=1}^{n+1}\langle \overline \nabla_{e_i}(\lambda^k\partial_r), e_i\rangle \\
= & \sum_{i=1}^{n+1}\left(e_i(\lambda^{k-1})\langle V,e_i\rangle +\lambda^{k-1}\langle \overline \nabla_{e_i}V,e_i\rangle \right)\\
=&\partial_r(\lambda^{k-1})\lambda+n\lambda^{k-1}{\lambda}'\\
=&(n+k)\lambda^{k-1}{\lambda}'.
\end{align*}
Integrate the above equation and using the divergence theorem, we have
\begin{align*}
(n+k)\int_\Omega \lambda^{k-1}{\lambda}'dv=& \int_\Omega \overline {\mathrm{div}}(Y) dv\\
= & \int_\Sigma \langle Y,\nu\rangle d\mu +\int_{\Gamma}\langle Y,\eta\rangle d\mu\\
=&\int_\Sigma \lambda^k\langle \partial_r,\nu\rangle d\mu-\lambda^k(a)|\Gamma|\\
\leq & \int_\Sigma \lambda^k d\mu-\lambda^k(a)|\Gamma|.
\end{align*}
The equality holds if and only if $\langle \partial_r,\nu\rangle=1$ everywhere on $\Sigma$, which implies that $\Sigma=\{r\}\times N^n$ is a slice.
\endproof

Suppose $\Sigma$ is a smooth, closed and embedded hypersurface in $\overline{M}^{n+1}$ which is mean convex and star-shaped in the sense that $\Sigma$ can be written as a graph over $N^n$,
\begin{equation*}
\Sigma=\mathrm{graph}~u_0,
\end{equation*}
where $u_0(\cdot )$ is a smooth positive function on $N^n$. Then $\Sigma$ is homologous to the boundary $\{a\}\times N^n$ and there is a bounded domain $\Omega$ with $\partial\Omega=\Sigma\cup (\{a\}\times N^n)$. We evolve $\Sigma$ along the inverse mean curvature flow (IMCF)
\begin{equation}\label{IMCF}
\frac{\partial }{\partial t}X=\frac{1}{H}\nu,
\end{equation}
where $\nu$ is the outward pointing unit normal vector field of the evolving hypersurface $\Sigma_t=X(\Sigma,t)$ and $H$ denotes the mean curvature of $\Sigma_t$.

Consider the quantity
\begin{equation*}
  Q(\Sigma)=|\Sigma|^{-\frac{n+k}{n}}\left(\int_\Sigma \lambda^kd\mu-k\int_\Omega \lambda^{k-1}{\lambda}'dv-\frac{k}{n+k}\lambda^k(a)|\Gamma|\right),
\end{equation*}
where $\Gamma=\{a\}\times N^n$. We have the following monotonicity for $Q(t):=Q(\Sigma_t)$ along IMCF.

\begin{proposition}\label{s2.prop}
Assume that warping function $\lambda(r)$ of $\overline{M}^{n+1}$ satisfies $\lambda'(r)>0$. Let $\Sigma_t$ be a star-shaped and mean convex solution of the IMCF.  Then  $Q'(t)\leq 0$ and $Q'(t)=0$ if and only if $\Sigma$ is a slice.
\end{proposition}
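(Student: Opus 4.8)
The plan is to compute $Q'(t)$ directly along the IMCF and show that the resulting integral expression has a sign. First I would recall the standard evolution equations along the IMCF \eqref{IMCF}: the area element evolves by $\partial_t(d\mu)=d\mu$, so $|\Sigma_t|'=|\Sigma_t|$ and hence $|\Sigma_t|=|\Sigma_0|e^t$; the enclosed region satisfies $\frac{d}{dt}\int_{\Omega_t}f\,dv=\int_{\Sigma_t}\frac{f}{H}\,d\mu$ for any function $f$ on $\overline M$; and for a quantity of the form $\int_{\Sigma_t}\varphi\,d\mu$ with $\varphi$ a function on $\overline M$, one gets $\frac{d}{dt}\int_{\Sigma_t}\varphi\,d\mu=\int_{\Sigma_t}\left(\frac{\langle\overline\nabla\varphi,\nu\rangle}{H}+\varphi\right)d\mu$. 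Applying these with $\varphi=\lambda^k$ (note $\overline\nabla\lambda^k=k\lambda^{k-1}\lambda'\partial_r$, so $\langle\overline\nabla\lambda^k,\nu\rangle=k\lambda^{k-1}\lambda'\langle\partial_r,\nu\rangle$) and $f=\lambda^{k-1}\lambda'$, and noting that $\frac{k}{n+k}\lambda^k(a)|\Gamma|$ is constant in $t$, I would assemble
\[
\frac{d}{dt}\left(\int_{\Sigma_t}\lambda^k d\mu-k\int_{\Omega_t}\lambda^{k-1}\lambda' dv-\tfrac{k}{n+k}\lambda^k(a)|\Gamma|\right)=\int_{\Sigma_t}\lambda^k d\mu+k\int_{\Sigma_t}\frac{\lambda^{k-1}\lambda'}{H}\bigl(\langle\partial_r,\nu\rangle-1\bigr)d\mu.
\]

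Next I would differentiate the full product $Q(t)=|\Sigma_t|^{-(n+k)/n}\bigl(\cdots\bigr)$ using the product rule, substituting $|\Sigma_t|'=|\Sigma_t|$. Writing $A(t)$ for the bracketed quantity, one obtains
\[
Q'(t)=|\Sigma_t|^{-\frac{n+k}{n}}\left(A'(t)-\tfrac{n+k}{n}A(t)\right),
\]
and the aim is to show $A'(t)-\frac{n+k}{n}A(t)\le 0$. Substituting the expression for $A'(t)$ and for $A(t)$ itself, the leading $\int_{\Sigma_t}\lambda^k d\mu$ terms combine to a multiple of $\int_{\Sigma_t}\lambda^k d\mu$, and the remaining terms involve $\int_{\Omega_t}\lambda^{k-1}\lambda' dv$, $\int_{\Sigma_t}\frac{\lambda^{k-1}\lambda'}{H}(\langle\partial_r,\nu\rangle-1)d\mu$, and the $|\Gamma|$ term. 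The key to closing the argument is Lemma \ref{s2.lem} applied to $\Sigma_t$ and $\Omega_t$: it controls $\int_{\Sigma_t}\lambda^k d\mu$ from below by $(n+k)\int_{\Omega_t}\lambda^{k-1}\lambda' dv+\lambda^k(a)|\Gamma|$, which is exactly what is needed to absorb the non-negative combination coming from $-\frac{n+k}{n}A(t)$. Since $\lambda'>0$ by hypothesis, $H>0$ by mean convexity, and $\langle\partial_r,\nu\rangle-1\le 0$ always (with equality iff $\Sigma_t$ is a slice), the term $k\int_{\Sigma_t}\frac{\lambda^{k-1}\lambda'}{H}(\langle\partial_r,\nu\rangle-1)d\mu$ is $\le 0$; combined with the Lemma this should force $Q'(t)\le 0$.

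The main obstacle I anticipate is the bookkeeping in the second step: after differentiating the product one must verify that the coefficients of the various integrals line up so that the $\langle\partial_r,\nu\rangle-1$ term and the Lemma's inequality point in the same direction — there is a real risk that a naive combination leaves a term of indefinite sign, and one may need to split $\int_{\Sigma_t}\lambda^k d\mu$ using the divergence identity from the proof of Lemma \ref{s2.lem} (namely $\int_{\Sigma_t}\lambda^k\langle\partial_r,\nu\rangle d\mu=(n+k)\int_{\Omega_t}\lambda^{k-1}\lambda' dv+\lambda^k(a)|\Gamma|$) rather than just the inequality, in order to cancel terms exactly before estimating. For the equality case, $Q'(t)=0$ requires $\langle\partial_r,\nu\rangle=1$ on $\Sigma_t$ (from the strict negativity of the flux term where $\lambda'>0$, $H>0$), which as in Lemma \ref{s2.lem} means $\Sigma_t$ is a slice $\{r\}\times N^n$; one should also check consistency with the equality case of Lemma \ref{s2.lem}, which is automatic since slices realize equality there too.
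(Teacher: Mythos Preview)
Your approach is correct and essentially identical to the paper's. The only cosmetic difference is organizational: the paper first estimates $A'(t)\le \int_{\Sigma_t}\lambda^k\,d\mu$ (by dropping $\langle\partial_r,\nu\rangle\le 1$ immediately) and then separately shows $\int_{\Sigma_t}\lambda^k\,d\mu\le \frac{n+k}{n}A(t)$ via Lemma~\ref{s2.lem}, whereas you keep $A'(t)$ exact and examine $A'(t)-\frac{n+k}{n}A(t)$ in one step. Your worry about the bookkeeping is unfounded: carrying out the subtraction gives
\[
A'(t)-\tfrac{n+k}{n}A(t)=-\tfrac{k}{n}\Bigl(\textstyle\int_{\Sigma_t}\lambda^k-(n+k)\int_{\Omega_t}\lambda^{k-1}\lambda'-\lambda^k(a)|\Gamma|\Bigr)+k\!\int_{\Sigma_t}\tfrac{\lambda^{k-1}\lambda'}{H}(\langle\partial_r,\nu\rangle-1),
\]
and both terms are nonpositive by Lemma~\ref{s2.lem} and $\langle\partial_r,\nu\rangle\le 1$ respectively (here $k\ge 0$ is implicitly used, as in the paper); no recourse to the exact divergence identity is needed. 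The equality analysis is likewise the same.
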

\proof
Firstly, along IMCF the area form $d\mu_t$ evolves by
\begin{equation*}
\frac{\partial}{\partial t}d\mu_t=d\mu_t.
\end{equation*}
In particular, the area evolves by
\begin{equation*}
\frac{d}{dt}|\Sigma_t|=|\Sigma_t|.
\end{equation*}
Then
\begin{align*}
  \frac{d}{dt} \int_{\Sigma_t} \lambda^kd\mu_t=&  \int_{\Sigma_t} k\lambda^{k-1}{\lambda}'\langle Dr, \frac{\partial X}{\partial t}\rangle d\mu_t+\int_{\Sigma_t} \lambda^kd\mu_t\\
=& \int_{\Sigma_t} k\lambda^{k-1}{\lambda}'\langle Dr, \frac{\nu}{H}\rangle d\mu_t+\int_{\Sigma_t} \lambda^kd\mu_t\\
\leq & \int_{\Sigma_t} k\lambda^{k-1}\frac{{\lambda}'}H d\mu_t+\int_{\Sigma_t} \lambda^kd\mu_t.
\end{align*}
Along IMCF, the co-area formula implies
\begin{equation*}
\frac{d}{dt} \int_{\Omega_t} \lambda^{k-1}{\lambda}'dv=\int_{\Sigma_t} k\lambda^{k-1}\frac{{\lambda}'}Hd\mu_t.
\end{equation*}
It follows that
\begin{align*}
&\frac{d}{dt} \left(\int_{\Sigma_t} \lambda^kd\mu_t-k\int_{\Omega_t} \lambda^{k-1}{\lambda}'dv-\frac{k}{n+k}\lambda^k(a)|\Gamma|\right)\\
\leq & \int_{\Sigma_t} \lambda^k d\mu_t\\
= & \frac{n+k}{n}\int_{\Sigma_t} \lambda^kd\mu_t-\frac{k}{n}\int_{\Sigma_t} \lambda^kd\mu_t\\
\leq &\frac{n+k}{n}\int_{\Sigma_t} \lambda^kd\mu_t-\frac{k(n+k)}{n}\int_{\Omega_t} \lambda^{k-1}{\lambda}'dv-\frac{k}{n}\lambda^k(a)|\Gamma|\\
=&\frac{n+k}{n}\left(\int_{\Sigma_t} \lambda^kd\mu_t-k\int_{\Omega_t} \lambda^{k-1}{\lambda}'dv-\frac{k}{n+k}\lambda^k(a)|\Gamma|\right),
\end{align*}
where we used \eqref{s2.lemma1} in the second inequality. We conclude that $Q'(t)\leq 0$. The equality holds if and only if $\langle \partial_r,\nu\rangle=1$ everywhere on $\Sigma_t$, which is equivalent to that $\Sigma_t$ is a slice.
\endproof

\section{Inequalities involving $k$-th boundary momentum, area and weighted volume}\label{sec ineq k boundary moment}

In this section, we apply the monotonicity of $Q(t)$ in Proposition \ref{s2.prop} and the convergence results of IMCF to derive the weighted geometric inequalities of the form \eqref{ineq eg1}.

\subsection{Inequalities in the space forms}
We first look at the space forms case.
\begin{theorem}
Let $k\geq 1$. Suppose that either
\begin{enumerate}
\item $\Sigma$ is a smooth, star-shaped mean convex hypersurface in the Euclidean space $\mathbb{R}^{n+1}$; or
\item $\Sigma$ is a smooth, star-shaped mean convex hypersurface in the hyperbolic space $\mathbb{H}^{n+1}$; or
\item $\Sigma$ is a smooth convex hypersurface in the sphere $\mathbb{S}^{n+1}$.
\end{enumerate}
Denote $\Omega$ the bounded domain enclosed by $\Sigma$. Then
\begin{equation}\label{s2.ineqn}
\int_\Sigma \lambda^kd\mu\geq \frac{n}{n+k} \omega_{n}^{-\frac{k}{n}}|\Sigma|^{\frac{n+k}{n}}+k\int_\Omega \lambda^{k-1}{\lambda}'dv,
\end{equation}
where $\omega_n=|\mathbb{S}^n|$ is the area of the unit sphere $\mathbb{S}^n$. The equality holds if and only if $\Sigma$ is a geodesic sphere centered at the origin.
\end{theorem}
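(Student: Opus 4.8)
The plan is to evolve $\Sigma$ by the inverse mean curvature flow \eqref{IMCF} and use the monotonicity of $Q(t)=Q(\Sigma_t)$ established in Proposition \ref{s2.prop}. Since $Q'(t)\le 0$ we obtain $Q(\Sigma)=Q(0)\ge\lim_{t\to\infty}Q(t)$, so all of \eqref{s2.ineqn} will follow once the limit is identified as the constant $\tfrac{n}{n+k}\omega_n^{-k/n}$. Note first that in all three space forms the inner boundary $\Gamma=\{a\}\times\mathbb S^n$ reduces to a single point (the chosen origin, where $\lambda(a)=0$ and $|\Gamma|=0$), so that $Q(\Sigma)=|\Sigma|^{-\frac{n+k}{n}}\bigl(\int_\Sigma\lambda^k\,d\mu-k\int_\Omega\lambda^{k-1}\lambda'\,dv\bigr)$ and $Q(\Sigma)\ge\tfrac{n}{n+k}\omega_n^{-k/n}$ is exactly \eqref{s2.ineqn}.

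Next I would invoke the standard long-time existence and convergence theory for IMCF in each ambient space. For a star-shaped, mean convex hypersurface in $\mathbb R^{n+1}$ the flow exists for all $t\ge 0$, stays star-shaped and mean convex, and the rescaled surfaces $e^{-t/n}\Sigma_t$ converge smoothly to a round sphere centered at the origin (Gerhardt, Urbas). For a star-shaped mean convex hypersurface in $\mathbb H^{n+1}$ the flow exists for all time and $\Sigma_t$ becomes exponentially umbilic, with the radial graph function $u(\cdot,t)\to\infty$ uniformly while $\operatorname{osc}u(\cdot,t)\to 0$ (Gerhardt, Ding). For a convex hypersurface in $\mathbb S^{n+1}$, contained in the open hemisphere $\{r<\pi/2\}$ centered at the origin, the flow exists for all time, stays convex and inside that hemisphere, and converges to the totally geodesic equator $\{r=\pi/2\}$ (Gerhardt, Makowski--Scheuer). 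In each case $\Sigma_t$ therefore converges, after the appropriate rescaling, in $C^1$ (indeed $C^\infty$) to a slice $\{r=\rho\}\times\mathbb S^n$.

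A direct computation on a slice $\{r=\rho\}\times\mathbb S^n$, using $d\mu=\lambda(\rho)^n\,d\mu_{\mathbb S^n}$ and $dv=\lambda^n\,dr\,d\mu_{\mathbb S^n}$, gives $|\Sigma|=\lambda(\rho)^n\omega_n$, $\int_\Sigma\lambda^k\,d\mu=\lambda(\rho)^{n+k}\omega_n$ and $\int_\Omega\lambda^{k-1}\lambda'\,dv=\tfrac{\omega_n}{n+k}\lambda(\rho)^{n+k}$, whence $Q\equiv\tfrac{n}{n+k}\omega_n^{-k/n}$ on every slice, \emph{independently of $\rho$}. In the Euclidean case $Q$ is invariant under dilations, so $Q(\Sigma_t)=Q(e^{-t/n}\Sigma_t)\to Q(\text{round sphere})=\tfrac{n}{n+k}\omega_n^{-k/n}$. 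In the hyperbolic and spherical cases $Q$ is not scale invariant, but since its slice value is radius-free one can squeeze $\Sigma_t$ between the slices $r=\min u(\cdot,t)$ and $r=\max u(\cdot,t)$: via the graph representation, each of the three quantities defining $Q(\Sigma_t)$ differs from its value on these two slices by a factor $1+o(1)$ as $t\to\infty$, and hence $Q(t)\to\tfrac{n}{n+k}\omega_n^{-k/n}$ again. Combined with $Q(0)\ge\lim_{t\to\infty}Q(t)$ this yields \eqref{s2.ineqn}. For the rigidity statement, equality in \eqref{s2.ineqn} forces $Q'(t)\equiv 0$, so by Proposition \ref{s2.prop} every $\Sigma_t$ --- in particular $\Sigma=\Sigma_0$ --- is a slice, i.e.\ a geodesic sphere centered at the origin; the converse is immediate from the slice computation above.

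The monotonicity being already in hand, the main obstacle is the limit computation in the non-Euclidean cases: unlike in $\mathbb R^{n+1}$, there is no conformal dilation reducing the problem to a fixed limit surface, and the individual quantities $\int_{\Sigma_t}\lambda^k\,d\mu$, $|\Sigma_t|$, $\int_{\Omega_t}\lambda^{k-1}\lambda'\,dv$ blow up (in $\mathbb H^{n+1}$) or degenerate, so one must isolate the limit of their specific combination --- this is precisely where the quantitative convergence information (exponential decay of $\operatorname{osc}u(\cdot,t)$ and of the traceless second fundamental form) does the work. A secondary point is to verify that the hypotheses of Proposition \ref{s2.prop} persist along the flow: star-shapedness (resp.\ convexity) is preserved by the cited theorems, and in $\mathbb S^{n+1}$ one must check that $\Sigma_t$ stays in $\{r<\pi/2\}$ so that $\lambda'=\cos r>0$ there.
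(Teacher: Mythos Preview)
Your overall strategy coincides with the paper's: run the IMCF, use the monotonicity of $Q(t)$ from Proposition~\ref{s2.prop}, and control the limiting value. The Euclidean argument is essentially the paper's (scale invariance plus Gerhardt--Urbas), and the spherical argument is fine up to the minor correction that the flow exists on a \emph{finite} interval $[0,T^*)$ and converges to the equator as $t\to T^*$, not as $t\to\infty$.

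The genuine gap is in the hyperbolic case. Your squeezing argument rests on the claim that $\operatorname{osc}u(\cdot,t)\to 0$, but this is not what Gerhardt's theorem provides, and in fact it is known to fail in general: the IMCF in $\mathbb H^{n+1}$ makes the surfaces exponentially umbilic ($\kappa_i\to 1$) and gives $|Du|/\lambda\to 0$, yet the radial oscillation can remain bounded away from zero (cf.\ Neves, Hung--Wang on the non-roundness of the conformal limit). Since $\lambda(r)=\sinh r$ grows exponentially, a persistent oscillation $c>0$ forces $\lambda(\max u)/\lambda(\min u)\to e^{c}\neq 1$, so the inner and outer slice values of $\int\lambda^k$, $|\Sigma_t|$ and $\int\lambda^{k-1}\lambda'$ differ by a genuine multiplicative factor and the squeeze does not close to $1+o(1)$.

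The paper sidesteps this entirely by \emph{not} computing $\lim_{t\to\infty}Q(t)$ but only showing $\liminf_{t\to\infty}Q(t)\ge\frac{n}{n+k}\omega_n^{-k/n}$, which is all that monotonicity requires. Two robust ingredients suffice. First, Lemma~\ref{s2.lem} (valid at every time) gives
\[
\int_{\Sigma_t}\lambda^k\,d\mu_t-k\int_{\Omega_t}\lambda^{k-1}\lambda'\,dv\ \ge\ \frac{n}{n+k}\int_{\Sigma_t}\lambda^k\,d\mu_t.
\]
Second, the gradient estimate alone yields $\sqrt{\det g}=\lambda^n\sqrt{\det g_{\mathbb S^n}}(1+o(1))$, and then H\"older's inequality on $\mathbb S^n$ gives
\[
|\Sigma_t|^{\frac{n+k}{n}}=\Bigl(\int_{\mathbb S^n}\lambda^n\Bigr)^{\frac{n+k}{n}}(1+o(1))\ \le\ \omega_n^{k/n}\int_{\mathbb S^n}\lambda^{n+k}\,(1+o(1))\ =\ \omega_n^{k/n}\int_{\Sigma_t}\lambda^k\,(1+o(1)).
\]
Dividing these two estimates yields $Q(t)\ge\frac{n}{n+k}\omega_n^{-k/n}(1+o(1))$ directly, with no need for $\operatorname{osc}u\to 0$. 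You should replace the squeezing step by this Lemma~\ref{s2.lem}\,$+$\,H\"older argument.
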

\proof
\textbf{Case 1} - in the Euclidean space. In this case, $\lambda(r)=r$ and the quantity
\begin{align*}
Q(t)=|\Sigma_t|^{-\frac{n+k}{n}}\left(\int_{\Sigma_t} r^k-\int_{\Omega_t} k r^{k-1}\right)
\end{align*}
is monotone nonincreasing along the IMCF \eqref{IMCF}. By the result of Gerhardt \cite{Ge90} and Urbas \cite{Ur90}, the flow hypersurface $\Sigma_t$ of IMCF remains to be star-shaped, mean convex, and expands to infinity. The rescaled IMCF converges to a sphere $\widetilde \Sigma_\infty=r_\infty \mathbb{S}^n$. Since $Q(t)$ is scale-invariant, it suffices to show that the inequality holds on a sphere. Let $\widetilde \Omega_\infty$ be the ball bounded by $\widetilde \Sigma_\infty$, then  \eqref{s2.lemma1} shows that
\begin{equation}\label{s3.1-1}
  \int_{\widetilde \Sigma_\infty} r^k - \int_{\widetilde \Omega_\infty}k r^{k-1}\ge \frac{n}{n+k}\int_{\widetilde \Sigma_\infty} r^{k} =\frac{n}{n+k}\omega_nr_\infty^{n+k}.
\end{equation}
Therefore $Q(\widetilde \Sigma_\infty)$ is bounded below by $\frac{n}{n+k}  \omega_{n}^{-\frac{k}{n}} $. From this the inequality \eqref{s2.ineqn} follows.

\textbf{Case 2} - in the hyperbolic space. Gerhardt \cite{Ge11} proved that the solution $\Sigma_t$ of the IMCF \eqref{IMCF} remains to be star-shaped and mean convex, $\Sigma_t$ expands to the infinity and the principal curvatures $\kappa_i$ decays to $1$ exponentially as $t\to\infty$. Let $g_{ij}$ be the induced metric on $\Sigma_t$. The asymptotical behavior of $\Sigma_t$ along IMCF proved by Gerhardt \cite{Ge11} implies that
\begin{align*}
\sqrt{\det g} =& \lambda^{n} \sqrt{\det g_{\mathbb{S}^{n}}}\left(1+O(e^{-\frac{2}{n}t})\right) \\
\lambda(r) =& O(e^{\frac{t}{n}}).
\end{align*}
Then by the inequality \eqref{s2.lemma1}, we have
\begin{align*}
& \int_{\Sigma_t} \lambda^kd\mu_t- k\int_{\Omega_t} \lambda^{k-1}{\lambda}'dv \\
\geq & \frac{n}{n+k}\int_{\Sigma_t} \lambda^kd\mu_t\\
=&\frac{n}{n+k}\int_{\mathbb{S}^{n}}\lambda^{n+k}(r)d\mu_{\mathbb{S}^{n}}\left(1+O(e^{-\frac{2}{n}t})\right).
\end{align*}
On the other hand, the H\"{o}lder inequality implies that
\begin{align*}
  |\Sigma_t|^{\frac{n+k}{n}} =& \biggl(\int_{\mathbb{S}^{n}}\lambda^{n}(r)d\mu_{\mathbb{S}^{n}} \left(1+O(e^{-\frac{2}{n}t})\right)\biggr)^{\frac{n+k}{n}}\\
\leq & \omega_{n}^{\frac{k}{n}}\int_{\mathbb{S}^{n}}\lambda^{n+k}(r)d\mu_{\mathbb{S}^{n}}\left(1+O(e^{-\frac{2}{n}t})\right).
\end{align*}
It follows that
\begin{align*}
  \lim_{t\to \infty}Q(t) =& \lim_{t\to \infty} |\Sigma_t|^{-\frac{n+k}{n}}\left(\int_{\Sigma_t} \lambda^kd\mu_t-k\int_{\Omega_t} \lambda^{k-1}{\lambda}'dv\right)\\
\geq & \frac{n}{n+k}\omega_{n}^{-\frac{k}{n}}.
\end{align*}
This implies the inequality \eqref{s2.ineqn}.

\textbf{Case 3} - in the sphere.  In this case, $\lambda(r)=\sin r$. Gerhardt \cite{Ge15} (see also \cite{MS15}) proved that for smooth strictly convex hypersurface $\Sigma$ in the sphere $\mathbb{S}^{n+1}$, the solution $\Sigma_t$ of the IMCF \eqref{IMCF} remains to be strictly convex and converges to the equator smoothly as $t\to T^*<\infty$, and properly rescaled solution converges to a sphere. We have
\begin{align*}
\lim_{t\to T^*} |\Sigma_t|=& \omega_{n}, \\
\lim_{t\to T^*} \int_{\Sigma_t} \lambda^k(r)d\mu_t= & \omega_{n},\\
\lim_{t\to T^*}\int_{\Omega_t}\lambda^{k-1}{\lambda}'dv=&\frac{\omega_{n}}{n+k}.
\end{align*}
Then
\begin{equation*}
\lim_{t\to T^*}Q(t)=\frac{n}{n+k}\omega_{n}^{-\frac{k}{n}}.
\end{equation*}
By the monotonicity of $Q(t)$, we conclude that
\begin{equation*}
Q(0)\geq \lim_{t\to T^*}Q(t)=\frac{n}{n+k}\omega_{n}^{-\frac{k}{n}}.
\end{equation*}
This is equivalent to \eqref{s2.ineqn}.

In all cases, the equality of \eqref{s2.ineqn} implies that $Q(0)=Q(t)$ for all $t>0$. In particular, $Q'(0)=0$ and so $\Sigma$ is a slice.
\endproof

\subsection{Inequalities in warped product manifolds}
In this subsection, we show that the aforementioned inequality remains valid in other warped product manifolds, subject to certain conditions on the warping functions.

Let $\overline{M}^{n+1}=[a,\infty)\times N^n$ be a warped product manifold with the metric
\begin{equation*}
\overline{g}=dr^2+\lambda^2(r)g_N,
\end{equation*}
where $\lambda(r)$ is a smooth positive function on $(a,\infty)$ with $\lambda''(r)\geq 0$ and $\lambda'(r)>0$. We assume further that the warping function $\lambda(r)$ satisfies
\begin{equation}\label{s3.assm2}
  \limsup_{r\to\infty}\frac{\lambda''\lambda}{\lambda'^2}<\infty,\quad and\quad \limsup_{r\to\infty,\\\lambda''(r)>0}\frac{\lambda'''\lambda}{\lambda'\lambda''}<\infty.
\end{equation}

\begin{theorem}\label{scheuer}
Let $k\geq 1$, and $\Sigma$ be a smooth, closed, mean convex and star-shaped hypersurface in the warped product manifold $\overline{M}^{n+1}=[a,\infty)\times N^n$ with warping function $\lambda(r)$ satisfying $\lambda''(r)\geq 0$, $\lambda'(r)>0$ and \eqref{s3.assm2}. Assume that $(N^n,g_N)$ has non-negative sectional curvature and both of the following two items hold:
\begin{enumerate}
\item In case that $\sup_{r>a}\lambda'(r)<\infty$, we assume further that $\mathrm{Ric}_N>0$.
\item In case that $\sup_{r>a}\lambda'(r)=\infty$, we assume that $\liminf\limits_{r\to\infty}\lambda''\lambda/\lambda'^2>0$.
\end{enumerate}
Denote $\Omega$ the domain enclosed by $\Sigma$ and $\Gamma=\{a\}\times N^n$. Then
\begin{equation}\label{s3.ineqn}
  \int_\Sigma \lambda^kd\mu\geq  \frac{n}{n+k} |N|^{-\frac{k}{n}}|\Sigma|^{\frac{n+k}{n}}+k\int_\Omega \lambda^{k-1}{\lambda}'dv+\frac{k}{n+k}\lambda^k(a)|\Gamma|.
\end{equation}
The equality holds if and only if $\Sigma$ is a slice $\{r\}\times N^n$.
\end{theorem}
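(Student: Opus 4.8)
The plan is to evolve $\Sigma=\Sigma_0$ by the inverse mean curvature flow \eqref{IMCF} and to combine the monotonicity of $Q(t):=Q(\Sigma_t)$ from Proposition~\ref{s2.prop} with the long-time behaviour of the flow, in the same spirit as Case~2 of the preceding theorem. Since $\lambda'(r)>0$ by hypothesis, Proposition~\ref{s2.prop} guarantees that $Q(t)$ is nonincreasing for as long as $\Sigma_t$ stays star-shaped and mean convex, so it is enough to prove that $\liminf_{t\to\infty}Q(t)\ge \frac{n}{n+k}|N|^{-k/n}$; by monotonicity this then gives $Q(0)=Q(\Sigma)\ge \frac{n}{n+k}|N|^{-k/n}$, which is exactly \eqref{s3.ineqn}.

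The first and main step is to invoke the convergence theory for the IMCF in this class of warped products. Under the structural hypotheses $\lambda''\ge 0$, $\lambda'>0$, \eqref{s3.assm2}, the curvature conditions on $(N^n,g_N)$, and the dichotomy on $\sup_{r>a}\lambda'$, the results of Scheuer ensure that the flow exists for all time, that each $\Sigma_t$ stays a star-shaped, mean convex graph $\{r=u(\cdot,t)\}$ over $N$ with $\min_N u(\cdot,t)\to\infty$, and that the gradient of the rescaled graph function decays. In particular the induced metric $g_t$ satisfies, uniformly on $N$ as $t\to\infty$,
\[
\sqrt{\det g_t}=\lambda(u)^n\sqrt{\det g_N}\,\bigl(1+o(1)\bigr),
\]
so that $|\Sigma_t|=(1+o(1))\int_N\lambda(u)^n\,d\mu_N$ and $\int_{\Sigma_t}\lambda^k\,d\mu_t=(1+o(1))\int_N\lambda(u)^{n+k}\,d\mu_N$, where $d\mu_N$ denotes the volume element of $g_N$. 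This analytic input is precisely what the technical conditions \eqref{s3.assm2} and the case split on $\sup_{r>a}\lambda'$ are designed to furnish, and establishing it — rather than the subsequent inequality manipulations — is the real obstacle.

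Granting this, I would apply Lemma~\ref{s2.lem} to $\Sigma_t$, whose enclosed region $\Omega_t$ satisfies $\partial\Omega_t=\Sigma_t\cup\Gamma$: it gives $\int_{\Omega_t}\lambda^{k-1}\lambda'\,dv\le\frac{1}{n+k}\bigl(\int_{\Sigma_t}\lambda^k\,d\mu_t-\lambda^k(a)|\Gamma|\bigr)$, and substituting this into the definition of $Q(t)$ makes the two boundary terms cancel, leaving
\[
Q(t)\ \ge\ \frac{n}{n+k}\,|\Sigma_t|^{-\frac{n+k}{n}}\int_{\Sigma_t}\lambda^k\,d\mu_t .
\]
Hölder's inequality on $N$ with exponents $\frac{n+k}{n}$ and $\frac{n+k}{k}$ gives $\bigl(\int_N\lambda(u)^n\,d\mu_N\bigr)^{\frac{n+k}{n}}\le |N|^{\frac{k}{n}}\int_N\lambda(u)^{n+k}\,d\mu_N$, so the asymptotics above force $\liminf_{t\to\infty}|\Sigma_t|^{-\frac{n+k}{n}}\int_{\Sigma_t}\lambda^k\,d\mu_t\ge |N|^{-k/n}$, and hence $\liminf_{t\to\infty}Q(t)\ge\frac{n}{n+k}|N|^{-k/n}$, which yields \eqref{s3.ineqn}. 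For the equality case, equality in \eqref{s3.ineqn} forces $Q(0)=\lim_{t\to\infty}Q(t)$, hence $Q'(0)=0$, so by the equality clause of Proposition~\ref{s2.prop} the initial hypersurface must be a slice; conversely, every slice realizes equality by the equality statement in Lemma~\ref{s2.lem}. The bulk of the work is thus the flow analysis of the second paragraph, while the rest of the argument is the Hölder-plus-monotonicity scheme, which proceeds exactly as in the hyperbolic case already treated.
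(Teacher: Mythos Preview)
Your proposal is correct and follows essentially the same route as the paper: invoke Scheuer's long-time existence and gradient decay for the IMCF to get $\sqrt{\det g_t}=\lambda(u)^n\sqrt{\det g_N}(1+o(1))$, combine Lemma~\ref{s2.lem} with H\"older's inequality over $N$ to bound $\liminf_{t\to\infty}Q(t)\ge\frac{n}{n+k}|N|^{-k/n}$, and conclude via the monotonicity of Proposition~\ref{s2.prop}. The paper records the sharper decay rate $O(e^{-\alpha t})$ from \cite{Sch19} in place of your $o(1)$, but the arguments are otherwise identical.
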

\proof
We consider the IMCF \eqref{IMCF} with $\Sigma$ as the initial hypersurface.  Scheuer \cite[Theorem 1.3]{Sch19} proved that the solution of \eqref{IMCF} exists for all time $t\in [0,+\infty)$ and the flow hypersurfaces $\Sigma_t$ become umbilical with the rate
\begin{equation*}
\left|h_i^j-\frac{\lambda'}{\lambda}\delta_i^j\right|\leq \frac{ct}{\lambda'\lambda},\quad \forall~t\in [0,\infty),
\end{equation*}
where $c$ is a constant depending only on the initial hypersurface $\Sigma$ and the $t$-factor can be replaced by $e^{-\alpha t}$ when $\lambda'$ is bounded.  The flow hypersurfaces $\Sigma_t$ are graphs over $N^n$
\begin{equation*}
\Sigma_t=\mathrm{graph}~u(t,\cdot)
\end{equation*}
with $u\in C^\infty([0,+\infty)\times N,(a,\infty))$. The graph function $u(t,\cdot)$ tends to $\infty$ as $t\to\infty $ and its gradient satisfies the estimate (\cite[Lemma 4.8]{Sch19})
\begin{equation}\label{est-Du}
\frac{|Du|^2}{\lambda^2}\leq ce^{-\alpha t},\quad \forall~t\in [0,\infty)
\end{equation}
for two positive constants $c,\alpha$.

Let $g_{ij}$ be the induced metric on $\Sigma_t$. Then
\begin{equation*}
g_{ij}=u_iu_j+\lambda^2\sigma_{ij}=\lambda^2(\sigma_{ij}+\lambda^{-2}u_iu_j),
\end{equation*}
where $\sigma_{ij}$ denotes the components of the metric $g_N$. The estimate \eqref{est-Du} implies that
\begin{align}\label{det g}
\sqrt{\det g} =& \lambda^{n} \sqrt{\det g_N}\left(1+O(e^{-\alpha t})\right).
\end{align}
Then by the inequality \eqref{s2.lemma1}, we have
\begin{align*}
& \int_{\Sigma_t} \lambda^kd\mu_t- k\int_{\Omega_t} \lambda^{k-1}{\lambda}'dv -\frac{k}{n+k}\lambda^k(a)|\Gamma|\\
\geq & \frac{n}{n+k}\int_{\Sigma_t} \lambda^kd\mu_t\\
=&\frac{n}{n+k}\int_{N^{n}}\lambda^{n+k}(r)d\mu_{N}\left(1+O(e^{-\alpha t})\right).
\end{align*}
On the other hand, the H\"{o}lder inequality implies that
\begin{align*}
|\Sigma_t|^{\frac{n+k}{n}} =& \biggl(\int_{N^{n}}\lambda^{n}(r)d\mu_{N} \left(1+O(e^{-\alpha t})\right)\biggr)^{\frac{n+k}{n}}\\
\leq & |N|^{\frac{k}{n}}\int_{N^{n}}\lambda^{n+k}(r)d\mu_{N}\left(1+O(e^{-\alpha t})\right),
\end{align*}
It follows that
\begin{align*}
  \lim_{t\to \infty}Q(t) =& \lim_{t\to \infty} |\Sigma_t|^{-\frac{n+k}{n}}\left(\int_{\Sigma_t} \lambda^kd\mu_t-k\int_{\Omega_t} \lambda^{k-1}{\lambda}'dv-\frac{k}{n+k}\lambda^k(a)|\Gamma|\right)\\
\geq & \frac{n}{n+k}|N|^{-\frac{k}{n}}.
\end{align*}
This implies the inequality \eqref{s3.ineqn}. If equality holds in \eqref{s3.ineqn}, then $Q'(t)=0$ for all $t>0$. This implies that $\Sigma$ is a slice $\{r\}\times N^n$ for some $r>a$.
\endproof

A similar result can be obtained when the fiber $N^n$ of the warped product $\overline{M}^{n+1}=[a, \infty) \times N$ has positive Ricci curvature. The restrictions on the warping function $\lambda$ are different in this case, specifically with the requirement that $\lambda'$ is bounded.
\begin{theorem}
Assume that $k \ge 1$ and let $\Sigma$ denote a smooth, closed, mean convex and star-shaped hypersurface in the warped product $\overline{M}^{n+1}=[a, \infty) \times N$, equipped with the metric $\overline{g}=dr^2+\lambda^2(r)g_N$, where $(N, g_N)$ is a closed Riemannian manifold with positive Ricci curvature.  Assume that $\lambda$ satisfies
$$C \ge \lambda^{\prime}(r) > 0, \quad C \ge \lambda^{1+\alpha}(r) \lambda^{\prime \prime}(r) \ge 0$$
for some positive constants $C$ and $\alpha$.
Denote $\Omega$ the domain enclosed by $\Sigma$ and $\Gamma=\{a\} \times N$. Then
$$
\int_{\Sigma} \lambda^k d \mu \ge \frac{n}{n+k} |N|^{-\frac{k}{n}}|\Sigma|^{\frac{n+k}{n}}+k \int_{\Omega} \lambda^{k-1} \lambda^{\prime} d v+\frac{k}{n+k} \lambda^k(a)|\Gamma|.
$$
The equality holds if and only if $\Sigma$ is a slice.
\end{theorem}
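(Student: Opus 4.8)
The plan is to run exactly the same argument as in Theorem \ref{scheuer}, only with a different convergence input for the inverse mean curvature flow. Concretely, I would consider the IMCF \eqref{IMCF} with the given $\Sigma$ as initial hypersurface and invoke the monotonicity of the scale-invariant quantity $Q(t)$ from Proposition \ref{s2.prop}; note that the standing hypotheses $\lambda'>0$ and $\lambda''\geq 0$ are in force, so Proposition \ref{s2.prop} applies verbatim. Everything then reduces, as before, to computing $\lim_{t\to\infty}Q(t)$ and showing it is $\geq \frac{n}{n+k}|N|^{-k/n}$.

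The key point is the long-time existence and asymptotic umbilicity of the flow under the new structural assumptions $C\geq\lambda'>0$ and $C\geq\lambda^{1+\alpha}\lambda''\geq 0$ together with $\mathrm{Ric}_N>0$. I would cite the relevant convergence theorem — this is again from Scheuer's work on IMCF in warped products (the case where the fiber carries positive Ricci curvature and $\lambda'$ is bounded); the boundedness of $\lambda'$ is precisely what is needed to replace the polynomial $t$-factor in the umbilicity estimate by an exponential decay $e^{-\alpha t}$, and $\mathrm{Ric}_N>0$ substitutes for the non-negative sectional curvature hypothesis used in Theorem \ref{scheuer} to force the flow hypersurfaces to round up. From this one extracts, exactly as in \eqref{est-Du} and \eqref{det g}, that $\Sigma_t=\mathrm{graph}\,u(t,\cdot)$ with $|Du|^2/\lambda^2\leq ce^{-\alpha t}$ and hence $\sqrt{\det g}=\lambda^n\sqrt{\det g_N}(1+O(e^{-\alpha t}))$, with $u\to\infty$ as $t\to\infty$.

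Granting this, the endgame is identical to the proof of Theorem \ref{scheuer}: apply Lemma \ref{s2.lem} (inequality \eqref{s2.lemma1}) to $\Sigma_t$ to get
\[
\int_{\Sigma_t}\lambda^k d\mu_t-k\int_{\Omega_t}\lambda^{k-1}\lambda' dv-\tfrac{k}{n+k}\lambda^k(a)|\Gamma|\;\geq\;\tfrac{n}{n+k}\int_{N^n}\lambda^{n+k}(r)\,d\mu_N\bigl(1+O(e^{-\alpha t})\bigr),
\]
estimate $|\Sigma_t|^{\frac{n+k}{n}}\leq |N|^{k/n}\int_{N^n}\lambda^{n+k}(r)\,d\mu_N(1+O(e^{-\alpha t}))$ by Hölder's inequality using \eqref{det g}, divide, and let $t\to\infty$ to obtain $\lim_{t\to\infty}Q(t)\geq\frac{n}{n+k}|N|^{-k/n}$. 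Combined with $Q'(t)\leq 0$ this gives $Q(0)\geq\frac{n}{n+k}|N|^{-k/n}$, which is exactly \eqref{s3.ineqn} with $|N|$ in place of $\omega_n$. For the equality case, if equality holds in the final inequality then $Q$ is constant, so $Q'(0)=0$, and Proposition \ref{s2.prop} forces $\langle\partial_r,\nu\rangle\equiv 1$, i.e. $\Sigma$ is a slice; conversely a slice gives equality in \eqref{s2.lemma1} and in Hölder, hence in the stated inequality.

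The main obstacle is purely a matter of having the right citation: one must verify that Scheuer's convergence result (or the appropriate variant) indeed holds under the hypotheses $C\geq\lambda'>0$, $C\geq\lambda^{1+\alpha}\lambda''\geq 0$, and $\mathrm{Ric}_N>0$ — in particular that these assumptions on $\lambda$ and $N$ suffice to guarantee global existence, the graphical representation, the gradient decay \eqref{est-Du}, and the exponential umbilicity rate. Once that input is in hand the remainder is a line-by-line repetition of the proof of Theorem \ref{scheuer}, so I would simply write "the proof is identical to that of Theorem \ref{scheuer}, using \cite[\dots]{Sch19} in place of \cite[Theorem 1.3]{Sch19}" after recording the relevant convergence statement.
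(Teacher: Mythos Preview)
Your approach is exactly the one taken in the paper: the proof is declared ``almost identical'' to that of Theorem \ref{scheuer}, with only the convergence input replaced. The one correction is the citation you were unsure of: the relevant IMCF result under the hypotheses $\mathrm{Ric}_N>0$, $C\geq\lambda'>0$, and $C\geq\lambda^{1+\alpha}\lambda''\geq 0$ is not another case of Scheuer \cite{Sch19} but rather Theorem~5.1 of H.~Zhou \cite{zhou2018inverse}, which (together with the boundedness of $\lambda'$) supplies the asymptotic \eqref{det g}; once that is in hand the rest is, as you say, line-by-line the same.
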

\begin{proof}
The proof is almost identical to Theorem \ref{scheuer}. In this case, the asymptotic \eqref{det g} is implied by Theorem 5.1 in H. Zhou's work \cite{zhou2018inverse} and the assumption that $\lambda'$ is uniformly bounded as $r\to\infty$. The rest of the proof is the same.
\end{proof}

\section{Inequalities involving weighted curvature integrals and quermassintegrals}\label{sec quermassintegrals}

Let $\overline{M}^{n+1}(K)$ be the simply connected space form of constant sectional curvature $K=-1, 0, 1$, which is viewed as a warped product manifold as described in \eqref{s3.spaceforms}.  We define
\begin{equation}\label{s4.Phi}
\Phi(r)=\int_0^r \lambda(s) d s=\left\{\begin{array}{cl}
\cosh r-1, & K=-1, \\
\dfrac{r^2}{2}, & K=0, \\
1-\cos r, & K=1.
\end{array}\right.
\end{equation}
It is well known that the vector field $V=\overline{\nabla} \Phi=\lambda(r) \partial_r$ on $\overline{M}^{n+1}(K)$ satisfies $\overline{\nabla}\left(\lambda(r) \partial_r\right)=\lambda^{\prime}(r) \overline{g}$, and hence is a conformal Killing field.

For a convex bounded domain $\Omega$ in the space form $\overline{M}^{n+1}(K)$, the $k$-th quermassintegral $W_k$ of $\Omega$ is defined as the measure of the set of totally geodesic $k$-dimensional subspaces which intersect $\Omega$ (\cite{San04}, \cite[Definition 2.1]{So05}) \footnote{The definition \eqref{s5.Wk} differs from Definition 2.1 in \cite{So05} by a constant multiple $\frac{n+1-k}{n+1}$.}:
\begin{equation}\label{s5.Wk}
W_k(\Omega)=\frac{\omega_{k-1} \cdots \omega_0}{\omega_{n-1} \cdots \omega_{n-k}} \int_{\mathcal{L}_k} \chi\left(L_k \cap \Omega\right) d L_k, \quad k=1, \ldots, n
\end{equation}
where $\mathcal{L}_k$ is the space of $k$-dimensional totally geodesic subspaces in $\overline{M}^{n+1}(K)$, $\omega_k=\left|\mathbb{S}^k\right|$ is the area of the unit round sphere $\mathbb{S}^k$, and the function $\chi$ is defined to be 1 if $L_k \cap \Omega \neq \emptyset$ and to be 0 otherwise. Furthermore, we have
$$
W_0(\Omega)=|\Omega|, \quad W_1(\Omega)=\frac{1}{n}|\partial \Omega|, \quad W_{n+1}(\Omega)=\left|\mathbb{B}^{n+1}(1)\right|=\frac{\omega_n}{n+1}.
$$
Assume that $\Sigma=\partial \Omega$ is smooth. Then the quermassintegrals are related to the curvature integrals of $\Sigma$ by (\cite[Proposition 7]{So05})
$$
\begin{aligned}
\int_\Sigma E_k(\kappa) d \mu & =(n-k) W_{k+1}(\Omega)-kKW_{k-1}(\Omega), \quad k=1, \ldots, n-1, \\
\int_\Sigma E_n(\kappa) d \mu & =\omega_n-nKW_{n-1}(\Omega),
\end{aligned}
$$
where $E_k(\kappa)$ is the normalized $k$-th mean curvature of $\Sigma$ defined as
$$
E_k(\kappa)=\binom{n}{k}^{-1} \sum_{1 \le i_1<\cdots<i_k \le n} \kappa_{i_1} \cdots \kappa_{i_k},
$$
where $\kappa=\left(\kappa_1, \ldots, \kappa_n\right)$ are the principal curvatures of $\Sigma$. The quermassintegrals satisfy the following  variational property (see \cite{BC97,Wx14}):
\begin{equation}\label{s4.Wkevl}
\frac{d}{d t} W_k\left(\Omega_t\right)=\int_{\partial \Omega_t} f E_k(\kappa) d \mu_t, \quad k=0, \ldots, n
\end{equation}
along any normal variation
\begin{equation}\label{s4.flow}
\frac{\partial}{\partial t}X=f \nu,
\end{equation}
where $\nu$ denotes the unit outer normal of $\partial \Omega_t$ and $f$ is a smooth function on $\partial \Omega_t$.

In the following, we will apply inverse curvature type flows to establish geometric inequalities for domains in space forms involving three geometric quantities: the weighted curvature integral $\int_{\partial\Omega}\Phi E_k d\mu$ and two distinct quermassintegrals $W_{k-1}(\Omega)$ and $W_k(\Omega)$. This on one hand generalizes the inequalities \cite{WZ} obtained earlier by the second named author with T. Zhou involving two quantities $\int_{\partial\Omega}\Phi E_k d\mu$ and $W_k(\Omega)$, and on the other hand extends the authors' work \cite{kwong2022inverse} with G. Wheeler and V.-M. Wheeler for the two dimensional case.

Let $\Sigma$ be a smooth closed hypersurface in the space form $\overline{M}^{n+1}(K)$ with induced metric $g_{ij}$. We define the $k$-th Newton tensor $T_{k}$ associated with the shape operator
$(h_i^j)$ on $\Sigma$ as follows:
$$
\left(T_{k}\right)_{j}^{i}=\frac{1}{k !} \sum_{i_{1}, \ldots, i_{k} \atop j_{1}, \ldots, j_{k}} \epsilon_{j j_{1} \cdots j_{k}}^{i i_{1} \cdots i_{k}} h_{i_{1}}^{ j_{1}} \cdots h_{i_{k}}^{j_{k}}.
$$
The next lemma is well known (see, e.g. \cite{kwong2016extension}).
\begin{lemma}\label{lem1}
Let $\Sigma$ be a smooth closed hypersurface with induced metric in the space form $\overline{M}^{n+1}(K)$. Let $V=\overline{\nabla}\Phi$ be the vector field as above and $\nu$ be the outward unit normal of $\Sigma$. Define $u=\langle V,\nu\rangle$ as the support function of $\Sigma$. We have
\begin{equation}\label{s5.lem1}
\mathrm{div}\left(T_{k-1}(\nabla \Phi)\right)
=k\binom{n}{k}\left(\lambda^{\prime} E_{k-1}-E_{k} u\right)
\end{equation}
for $k=1,\cdots,n$. As a consequence, we have the following Minkowski formula
\begin{align}\label{s5.Mink}
\int_{\Sigma} \lambda' E_{k-1}(\kappa)d\mu = \int_{\Sigma} uE_{k}(\kappa)d\mu.
\end{align}
\end{lemma}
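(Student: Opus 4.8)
The plan is to compute the surface divergence on the left-hand side of \eqref{s5.lem1} directly, combining the algebraic properties of the Newton tensor with the conformal Killing identity $\overline\nabla V=\lambda'\,\overline g$ for $V=\overline\nabla\Phi$. First I would record the splitting of $V$ along $\Sigma$ into its tangential and normal parts: for every tangent vector $X$ one has $X(\Phi|_\Sigma)=\langle V,X\rangle$, so the tangential part of $V$ is exactly $\nabla\Phi$, the gradient of $\Phi|_\Sigma$, and $V=\nabla\Phi+u\nu$ with $u=\langle V,\nu\rangle$. I would also recall the two standard identities for the $(k-1)$-th Newton tensor on an $n$-dimensional hypersurface,
\begin{equation*}
\mathrm{tr}(T_{k-1})=(n-k+1)\binom{n}{k-1}E_{k-1}=k\binom{n}{k}E_{k-1},\qquad (T_{k-1})^{ij}h_{ij}=k\binom{n}{k}E_{k},
\end{equation*}
the second of which is Euler's relation for the degree-$k$ homogeneous polynomial $\binom{n}{k}E_k$ in the entries $h_i^j$.

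Next I would expand, by the Leibniz rule,
\begin{equation*}
\mathrm{div}\big(T_{k-1}(\nabla\Phi)\big)=(\mathrm{div}\,T_{k-1})(\nabla\Phi)+\langle T_{k-1},\nabla^2\Phi\rangle,
\end{equation*}
where $\nabla^2\Phi$ denotes the intrinsic Hessian of $\Phi|_\Sigma$. The first term vanishes because the Newton tensor is divergence free in a space form: this is a consequence of the Codazzi equation $\nabla_i h_{jk}=\nabla_j h_{ik}$, the ambient curvature terms that would otherwise appear being absent in constant sectional curvature. This is the one genuinely structural input; it is classical (see \cite{kwong2016extension} and the references therein), and I would either quote it or include the short antisymmetrization argument. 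For the second term, the conformal Killing property gives $\overline\nabla^2\Phi=\lambda'\,\overline g$, and the Gauss relation between the ambient and intrinsic Hessians of a restricted function reads
\begin{equation*}
\nabla^2\Phi(e_i,e_j)=\overline\nabla^2\Phi(e_i,e_j)-h_{ij}\,\partial_\nu\Phi=\lambda'\,g_{ij}-u\,h_{ij},
\end{equation*}
using $\partial_\nu\Phi=\langle V,\nu\rangle=u$. Contracting with $T_{k-1}$ and inserting the two algebraic identities yields $\mathrm{div}(T_{k-1}(\nabla\Phi))=\lambda'\,\mathrm{tr}(T_{k-1})-u\,(T_{k-1})^{ij}h_{ij}=k\binom{n}{k}(\lambda'E_{k-1}-E_k u)$, which is \eqref{s5.lem1}.

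For the Minkowski formula \eqref{s5.Mink} I would integrate \eqref{s5.lem1} over the closed hypersurface $\Sigma$: the left-hand side is the divergence of a globally defined tangent vector field, hence integrates to zero, and dividing through by $k\binom{n}{k}$ gives $\int_\Sigma \lambda' E_{k-1}\,d\mu=\int_\Sigma uE_k\,d\mu$. The only delicate points are fixing the sign convention in the Gauss relation (pinned down by taking $\nu$ outward, so that, e.g., the round sphere has positive principal curvatures and the computation is consistent with $\Phi|_\Sigma$ being constant on a centered geodesic sphere) and the divergence-free property of $T_{k-1}$; I do not expect either to pose a real obstacle, so the proof is essentially a bookkeeping exercise once these ingredients are in place.
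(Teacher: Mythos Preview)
Your proposal is correct and follows essentially the same approach as the paper. The paper's proof simply invokes Proposition~3.1 and Lemma~2.1 of \cite{kwong2016extension} to write down $\mathrm{div}(T_{k-1}(\nabla\Phi))=\lambda'\,\mathrm{tr}(T_{k-1})-\langle T_{k-1},A\rangle u$ directly, whereas you unpack this step by hand via the Leibniz rule, the divergence-free property of $T_{k-1}$ in a space form, and the Gauss relation $\nabla^2\Phi=\lambda' g- u h$; after that, both arguments finish with the same trace identities and the same integration to obtain \eqref{s5.Mink}.
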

\begin{proof}
By Proposition 3.1 and Lemma 2.1 in \cite{kwong2016extension},
\begin{equation*}
\begin{split}
\mathrm{div}\left(T_{k-1}(\nabla \Phi)\right)
=&2 \cdot \frac{1}{2} \lambda^{\prime} \mathrm{tr}\left(T_{k-1}\right)-\left\langle T_{k-1}, A\right\rangle u\\
=&\lambda^{\prime}(n-k+1)\binom{n }{k-1} E_{k-1}-k \binom{n}{k} E_{k}u\\
=&\frac{n !}{(k-1) !(n-k) !}\left(\lambda^{\prime} E_{k-1}-E_{k} u\right)\\
=&k\binom{n}{k}\left(\lambda^{\prime} E_{k-1}-E_{k} u\right).
\end{split}
\end{equation*}
The equation \eqref{s5.Mink} follows by integrating \eqref{s5.lem1} over $\Sigma$.
\end{proof}

We also need the following variational formula.
\begin{lemma}\label{s4.lem1}
Let $\Sigma_t$ be a smooth family of closed hypersurfaces in the space form $\overline{M}^{n+1}(K)$ satisfying \eqref{s4.flow}.  For all integers $k=1,\cdots n$ we have
\begin{equation}\label{s4.evl-PhiEk}
  \frac{d}{dt}\left(\int_{\Sigma_t}\Phi E_k d\mu_t+kW_{k-1}\left(\Omega_{t}\right)\right)=\int_{\Sigma_t}\biggl((k+1)uE_k+(n-k)\Phi E_{k+1}\biggr)f d\mu_t.
\end{equation}
\end{lemma}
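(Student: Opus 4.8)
The plan is to differentiate the two pieces of the quantity $\int_{\Sigma_t}\Phi E_k\,d\mu_t+kW_{k-1}(\Omega_t)$ separately along the flow \eqref{s4.flow} and then combine, using the divergence identity \eqref{s5.lem1} of Lemma~\ref{lem1} to eliminate the term that would otherwise spoil the formula. First I would record the standard evolution equations under \eqref{s4.flow}: the area form evolves by $\frac{\partial}{\partial t}d\mu_t=fH\,d\mu_t=nfE_1\,d\mu_t$, the function $\Phi$ satisfies $\frac{\partial}{\partial t}\Phi=\langle\overline\nabla\Phi,f\nu\rangle=fu$ (since $\Phi$ is a fixed function on the ambient space and $u=\langle V,\nu\rangle$ with $V=\overline\nabla\Phi$), and the normalized $k$-th mean curvature $E_k$ has the well-known variation $\frac{\partial}{\partial t}E_k=\ldots$ — rather than using the full (messy) formula for $\partial_t E_k$, I would instead invoke directly the first variation of the weighted curvature integral. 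Concretely, combining $\partial_t(\Phi\,d\mu_t)$ with the variation of $\int_\Sigma E_k\,d\mu$ one gets, after an integration by parts transferring the derivative onto $\Phi$ via the Newton-tensor identity $\mathrm{div}(T_{k-1}\nabla\Phi)=k\binom{n}{k}(\lambda'E_{k-1}-uE_k)$,
\begin{equation*}
\frac{d}{dt}\int_{\Sigma_t}\Phi E_k\,d\mu_t=\int_{\Sigma_t}\Bigl((k+1)uE_k+(n-k)\Phi E_{k+1}\Bigr)f\,d\mu_t-k\int_{\Sigma_t}\lambda' E_{k-1}f\,d\mu_t .
\end{equation*}
The leftover term $-k\int_{\Sigma_t}\lambda'E_{k-1}f\,d\mu_t$ is precisely what the second piece cancels: by the variational property \eqref{s4.Wkevl} of quermassintegrals, $\frac{d}{dt}W_{k-1}(\Omega_t)=\int_{\partial\Omega_t}fE_{k-1}\,d\mu_t$, and since $\lambda'=\lambda'(r)$ appears through the conformal-Killing relation $\overline\nabla V=\lambda'\overline g$ — wait, here I must be careful: the quermassintegral variation gives $\int fE_{k-1}$, not $\int f\lambda'E_{k-1}$. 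So the actual cancellation requires using the correct $K$-dependent form. In the space form $\overline M^{n+1}(K)$ one has the curvature-integral identity $\int_\Sigma E_k\,d\mu=(n-k)W_{k+1}(\Omega)-kKW_{k-1}(\Omega)$, and it is $\Phi''=\lambda'$ together with $\lambda'' = -K\lambda$ that makes the weight bookkeeping close up; I would track the $\lambda'$ factor through the integration by parts and verify it matches $k\,\frac{d}{dt}W_{k-1}(\Omega_t)$ exactly.

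Thus the key steps, in order, are: (1) compute $\frac{\partial}{\partial t}(\Phi\,d\mu_t)$ using $\partial_t\Phi=fu$ and $\partial_t d\mu_t=nfE_1 d\mu_t$; (2) compute $\frac{d}{dt}\int_{\Sigma_t}E_k\,d\mu_t$ from the first variation of the curvature integral (equivalently, differentiate the identity $\int_\Sigma E_k=(n-k)W_{k+1}-kKW_{k-1}$ and use \eqref{s4.Wkevl}); (3) assemble $\frac{d}{dt}\int_{\Sigma_t}\Phi E_k\,d\mu_t$ and integrate by parts using Lemma~\ref{lem1} to convert the $\int \Phi\,\mathrm{div}(T_{k-1}\nabla\Phi)$-type term, producing the desired $(k+1)uE_k+(n-k)\Phi E_{k+1}$ together with a residual multiple of $\int_{\Sigma_t}\lambda'E_{k-1}f\,d\mu_t$ (or, after using $\lambda''=-K\lambda$, of $\int E_{k-1}f$); (4) add $k\frac{d}{dt}W_{k-1}(\Omega_t)=k\int_{\partial\Omega_t}fE_{k-1}\,d\mu_t$ and observe the residual cancels, leaving \eqref{s4.evl-PhiEk}.

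The main obstacle I expect is step (3): correctly performing the integration by parts so that the $\Phi$-weighted derivative of $E_k$ is rewritten in terms of $\lambda'E_{k-1}$ and $uE_k$. This is where the identity $\mathrm{div}(T_{k-1}\nabla\Phi)=k\binom{n}{k}(\lambda'E_{k-1}-uE_k)$ does the heavy lifting, but one must also handle the term coming from $\nabla_i(E_k)$ paired against $\nabla^i\Phi$, using the divergence-freeness of the Newton tensor in a space form (an ambient-constant-curvature fact) to write $\int_\Sigma\langle\nabla E_k,\nabla\Phi\rangle$ in a usable form; a clean route is to avoid $\partial_t E_k$ entirely and differentiate the global curvature-integral identity instead, which sidesteps the pointwise bookkeeping. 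The remaining steps are routine once the normalization constants $\binom{n}{k}$, $(n-k)$ and the sign of $K$ are matched carefully.
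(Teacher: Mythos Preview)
Your approach is essentially the one the paper takes---the paper simply cites Lemma~3.2 of \cite{WZ} for the evolution of $\int_{\Sigma_t}\Phi E_k\,d\mu_t$ and then adds $k\frac{d}{dt}W_{k-1}(\Omega_t)$ via \eqref{s4.Wkevl}. Your direct computation reproduces that cited lemma, so structurally there is no difference.

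The one point where your write-up wobbles is the residual term, and it is worth resolving explicitly so you do not get stuck in step~(3)/(4). Carrying out the computation you describe---using $\partial_t h_i^j=-\nabla^j\nabla_i f - f(h^2)_i^j - Kf\delta_i^j$, the trace identities for $T_{k-1}$, and two integrations by parts via \eqref{s5.lem1}---one finds
\[
\frac{d}{dt}\int_{\Sigma_t}\Phi E_k\,d\mu_t
=\int_{\Sigma_t}\Bigl((k+1)uE_k+(n-k)\Phi E_{k+1}\Bigr)f\,d\mu_t
\;-\;k\int_{\Sigma_t}\bigl(\lambda'+K\Phi\bigr)E_{k-1}f\,d\mu_t.
\]
The $-k\int f\lambda'E_{k-1}$ piece comes from the double integration by parts of the Hessian term (exactly as you guessed), while the $-kK\int f\Phi E_{k-1}$ piece comes from the ambient-curvature term $-Kf\delta_i^j$ in $\partial_t h$ paired with $\mathrm{tr}(T_{k-1})=k\binom{n}{k}E_{k-1}$. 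The identity that makes the cancellation work is
\[
\lambda'(r)+K\,\Phi(r)\equiv 1
\]
in all three space forms (check directly from \eqref{s4.Phi}, or note $\Phi''=\lambda'$, $\lambda''=-K\lambda$, $\lambda'(0)=1$, $\Phi(0)=0$). Hence the residual is exactly $-k\int_{\Sigma_t}E_{k-1}f\,d\mu_t$, which is cancelled by $k\frac{d}{dt}W_{k-1}(\Omega_t)=k\int_{\Sigma_t}fE_{k-1}\,d\mu_t$. With that identity in hand your outline goes through without further issue.
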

\proof
This follows from combining Lemma 3.2 in \cite{WZ} and the evolution equation \eqref{s4.Wkevl}.
\endproof

\subsection{Inequalities in the Euclidean space}
We first consider the Euclidean space case.
\begin{theorem}\label{s4.thmRn}
Let $\Sigma$ be a smooth, closed, star-shaped and $k$-convex hypersurface in $\mathbb R^{n+1}$ ($n\geq 2$) enclosing a bounded domain $\Omega$. Then for all $k=1,\cdots,n$, there holds
\begin{equation}\label{s4.KM3}
  \int_{\Sigma}\Phi E_kd\mu+kW_{k-1}\left(\Omega\right)\geq \frac{n+2+k}{2(n+2-k)}\omega_n\left(\frac{n+1-k}{\omega_n}\right)^{\frac{n+2-k}{n+1-k}}W_k(\Omega)^{\frac{n+2-k}{n+1-k}},
\end{equation}
where $\Phi=r^2/2$ is the function defined in \eqref{s4.Phi}. Equality holds in \eqref{s4.KM3} if and only if $\Sigma$ is a coordinate sphere.
\end{theorem}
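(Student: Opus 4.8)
The plan is to prove \eqref{s4.KM3} by running an expanding inverse curvature flow along which star-shaped $k$-convex hypersurfaces of $\mathbb{R}^{n+1}$ become round after rescaling, and showing that a scale-invariant combination of $\int_{\Sigma_t}\Phi E_k\,d\mu_t$, $W_{k-1}(\Omega_t)$ and $W_k(\Omega_t)$ is monotone along it. Concretely, I would evolve $\Sigma=\Sigma_0$ by
\[
\frac{\partial}{\partial t}X=\frac{E_{k-1}(\kappa)}{E_k(\kappa)}\,\nu .
\]
For smooth, closed, star-shaped and $k$-convex initial data this flow is known to exist for all time, to keep $\Sigma_t$ star-shaped and $k$-convex, to expand it to infinity, and to have its rescaling converge smoothly to a round sphere (long-time existence and convergence for such $\sigma_k/\sigma_{k-1}$-type inverse flows being by now standard; cf.\ \cite{Ge90,Ur90} and the inverse curvature flow references in the introduction). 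By the variational formula \eqref{s4.Wkevl}, $\frac{d}{dt}W_k(\Omega_t)=\int_{\Sigma_t}E_{k-1}\,d\mu_t=(n-k+1)W_k(\Omega_t)$, so $W_k(\Omega_t)=e^{(n-k+1)t}W_k(\Omega_0)$.

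I would then set $M(t):=\int_{\Sigma_t}\Phi E_k\,d\mu_t+kW_{k-1}(\Omega_t)$ and $Q(t):=W_k(\Omega_t)^{-\frac{n+2-k}{n+1-k}}M(t)$. Since $W_k(\Omega_t)^{-\frac{n+2-k}{n+1-k}}$ is a constant multiple of $e^{-(n-k+2)t}$, it is equivalent to prove $\frac{d}{dt}M\le(n-k+2)M$. By \eqref{s4.evl-PhiEk} and the choice of speed,
\[
\frac{d}{dt}M=(k+1)\int_{\Sigma_t}uE_{k-1}\,d\mu_t+(n-k)\int_{\Sigma_t}\Phi\,\frac{E_{k-1}E_{k+1}}{E_k}\,d\mu_t .
\]
The Minkowski formula \eqref{s5.Mink} (applied with $k$ replaced by $k-1$ and $\lambda'\equiv1$; the borderline case $k=1$ being $\int_{\Sigma_t}u\,d\mu_t=(n+1)W_0(\Omega_t)$ via the divergence theorem), together with $\int_{\Sigma_t}E_{k-2}\,d\mu_t=(n-k+2)W_{k-1}(\Omega_t)$, turns the first integral into $(k+1)(n-k+2)W_{k-1}(\Omega_t)$, while the Newton--MacLaurin inequality $E_{k-1}E_{k+1}\le E_k^2$ and $\Phi>0$ bound the second by $(n-k)\int_{\Sigma_t}\Phi E_k\,d\mu_t$. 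Hence
\[
\frac{d}{dt}M\le(n-k)\int_{\Sigma_t}\Phi E_k\,d\mu_t+(k+1)(n-k+2)W_{k-1}(\Omega_t).
\]

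The step I expect to be the main obstacle is absorbing the surplus $(n-k+2)W_{k-1}$ on the right into the curvature integral: this needs the sharp two-term inequality
\[
\int_{\Sigma_t}\Phi E_k\,d\mu_t\ \ge\ \frac{n-k+2}{2}\,W_{k-1}(\Omega_t)
\]
for star-shaped $k$-convex hypersurfaces, which I would obtain by composing the two-term inequality $\int_\Sigma\Phi E_k\,d\mu\ge\frac{\omega_n}{2}\big(\frac{(n+1-k)W_k(\Omega)}{\omega_n}\big)^{\frac{n+2-k}{n+1-k}}$ of \cite{WZ} with the Alexandrov--Fenchel inequality $W_{k-1}(\Omega)\le\frac{\omega_n}{n+2-k}\big(\frac{(n+1-k)W_k(\Omega)}{\omega_n}\big)^{\frac{n+2-k}{n+1-k}}$ for the quermassintegrals of $k$-convex star-shaped domains \cite{GL09,GL15}. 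Given this, $(k+1)(n-k+2)W_{k-1}=k(n-k+2)W_{k-1}+(n-k+2)W_{k-1}\le k(n-k+2)W_{k-1}+2\int_{\Sigma_t}\Phi E_k\,d\mu_t$, and substituting yields $\frac{d}{dt}M\le(n-k+2)\big(\int_{\Sigma_t}\Phi E_k\,d\mu_t+kW_{k-1}(\Omega_t)\big)=(n-k+2)M$, so $Q'(t)\le0$.

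Finally, $Q$ is scale-invariant, both $M$ and $W_k^{(n+2-k)/(n+1-k)}$ scaling like the $(n+2-k)$-th power of length; letting $t\to\infty$ and using the convergence of the rescaled flow to a round sphere, $Q(t)$ tends to the value of $Q$ on a coordinate sphere $\{r=\rho\}$, which from $\int_{\{r=\rho\}}\Phi E_k\,d\mu=\frac{\omega_n}{2}\rho^{n+2-k}$, $W_{k-1}=\frac{\omega_n}{n+2-k}\rho^{n+2-k}$ and $W_k=\frac{\omega_n}{n+1-k}\rho^{n+1-k}$ equals $\frac{n+2+k}{2(n+2-k)}\omega_n\big(\frac{n+1-k}{\omega_n}\big)^{\frac{n+2-k}{n+1-k}}$. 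By monotonicity $Q(0)$ is at least this constant, which is exactly \eqref{s4.KM3}. For the equality case, equality in \eqref{s4.KM3} forces $Q\equiv Q(0)$ along the flow and hence equality in every estimate above; in particular equality holds at $t=0$ in the two-term inequality of the previous paragraph, whose rigidity (that of \cite{WZ}) identifies $\Sigma$ as a coordinate sphere, while conversely a direct computation shows coordinate spheres realize equality.
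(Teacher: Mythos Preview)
Your proposal is correct and follows essentially the same route as the paper: the same inverse curvature flow $\partial_t X=(E_{k-1}/E_k)\,\nu$, the same scale-invariant quantity $Q_k=W_k^{-\frac{n+2-k}{n+1-k}}\bigl(\int_\Sigma\Phi E_k\,d\mu+kW_{k-1}\bigr)$, and the same chain of Newton--MacLaurin, the Minkowski formula, and the two-term bound $\int_\Sigma\Phi E_k\,d\mu\ge\frac{n+2-k}{2}W_{k-1}$ to obtain $Q_k'\le0$. The only difference is that the paper invokes this last bound directly as the Kwong--Miao inequality \cite{kwong2014new,KM15} rather than deriving it, as you do, by composing \cite{WZ} with the Alexandrov--Fenchel inequality; the direct citation is both older and more elementary.
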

\begin{remark}
Applying the Alexandrov-Fenchel inequality \cite{GL09}
\begin{equation*}
W_k(\Omega)\geq c_{k,\ell}W_{\ell}(\Omega)^{\frac{n+1-k}{n+1-\ell}},\quad k>\ell
\end{equation*}
for star-shaped and $k$-convex hypersurfaces, we can replace the right hand side of \eqref{s4.KM3} by $W_\ell(\Omega)^{\frac{n+2-k}{n+1-\ell}}$ for $\ell<k$. However, for $\ell<k$, this inequality reduces to Theorem 1.2 in \cite{WZ}.
\end{remark}
\proof
We consider the inverse curvature flow
\begin{equation}\label{s4.flow1}
\frac {\partial X }{\partial t}=\frac{E_{k-1}}{E_{k}}\nu,\quad k=1,\cdots,n
\end{equation}
in $\mathbb{R}^{n+1}$. If the initial hypersurface in $\mathbb{R}^{n+1}$ is star-shaped and $k$-convex, Gerhardt \cite{Ge90} and Urbas \cite{Ur90} showed that the $k$-convexity is preserved, and the solution $\Sigma_t$ expands to infinity and properly rescaled solution converges to a round sphere as $t\to\infty$.

Choosing $f={E_{k-1}}/{E_{k}}$ in \eqref{s4.evl-PhiEk}, we have that along the flow \eqref{s4.flow1},
\begin{align}\label{s4.pf1}
&\frac{d}{dt}\left(\int_{\Sigma_t}\Phi E_k d\mu_t+k W_{k-1}\left(\Omega_{t}\right)\right)\nonumber\\
=&\int_{\Sigma_t}\left((k+1)uE_{k-1}+(n-k)\Phi E_{k+1}\frac{E_{k-1}}{E_k}\right)d\mu_t\nonumber\\
\leq &\int_{\Sigma_t}\left((k+1)uE_{k-1}+(n-k)\Phi E_{k}\right)d\mu_t\nonumber\\
=&\int_{\Sigma_t}\left((k+1)E_{k-2}+(n-k)\Phi E_{k}\right)d\mu_t\nonumber\\
=&(n+2-k)(k+1)W_{k-1}\left(\Omega_{t}\right)+(n-k)\int_{\Sigma_t}\Phi E_{k}d\mu_t,
\end{align}
where we used the Newton-MacLaurin inequality
\begin{equation}\label{s4.Newt}
E_{k+1}E_{k-1}\leq E_k^2
\end{equation}
and Minkowski formula \eqref{s5.Mink}.

On the other hand, choosing $f={E_{k-1}}/{E_{k}}$ in \eqref{s4.Wkevl}, we have
\begin{align}\label{s4.pf3}
\frac{d}{dt}W_k(\Omega_t)= &\int_{\Sigma_t}E_{k-1} d\mu_t=(n+1-k) W_k(\Omega_t)
\end{align}
along the flow \eqref{s4.flow1}. Define the quantity
\begin{equation*}
Q_{k}(t)=W_k(\Omega_t)^{-\frac{n+2-k}{n+1-k}}\left(\int_{\Sigma_t}\Phi E_k d\mu_t+k W_{k-1}\left(\Omega_{t}\right)\right).
\end{equation*}
Combining \eqref{s4.pf1} and \eqref{s4.pf3} yields
\begin{align}\label{s4.pf5}
    \frac{d}{dt}Q_{k}(t)=&W_k(\Omega_t)^{-\frac{n+2-k}{n+1-k}}\biggl( \frac{d}{dt}\left(\int_{\Sigma_t}\Phi E_k d\mu_t+k W_{k-1}\left(\Omega_{t}\right)\right)\nonumber\\
   &\quad  -\frac{n+2-k}{n+1-k}\left(\int_{\Sigma_t}\Phi E_k d\mu_t +k W_{k-1}\left(\Omega_{t}\right)\right)W_k(\Omega_t)^{-1}\frac{d}{dt}W_k(\Omega_t)\biggr)\nonumber\\
\leq &W_k(\Omega_t)^{-\frac{n+2-k}{n+1-k}}\biggl((n+2-k) (k+1)W_{k-1}(\Omega_t)+(n-k)\int_{\Sigma_t}\Phi E_k d\mu_t\nonumber\\
&\quad -(n+2-k)\left(\int_{\Sigma_t}\Phi E_k d\mu_t+k W_{k-1}\left(\Omega_{t}\right)\right)\biggr)\nonumber\\
=& W_k(\Omega_t)^{-\frac{n+2-k}{n+1-k}}\biggl( (n+2-k)W_{k-1}(\Omega_t)-2\int_{\Sigma_t}\Phi E_k d\mu_t\biggr).
\end{align}
Then it follows from the inequality
\begin{equation}\label{s4.KM2}
\int_{\Sigma}\Phi(r) E_k(\kappa)d\mu~\geq~\frac {n+2-k}2W_{k-1}(\Omega),\qquad k=1,\cdots,n
\end{equation}
of Kwong and Miao \cite{kwong2014new,KM15} for $k$-convex hypersurface that the right hand side of \eqref{s4.pf5} is non-positive. Therefore,
\begin{equation}\label{s4.pf6}
\frac{d}{dt}Q_{k}(t)\leq 0
\end{equation}
along the flow \eqref{s4.flow1}. If equality holds in \eqref{s4.pf6} at some time $t$, then equality holds in \eqref{s4.Newt} everywhere on $\Sigma_t$ and equality also holds in Kwong and Miao's inequality \eqref{s4.KM2}. This implies that $\Sigma_t$ must be a coordinate sphere.

Note that the quantity $Q_{k}(t)$ is a scaling invariant. Then
\begin{align*}
 Q_{k}(0)\geq & \lim_{t\to\infty}  Q_{k}(t)=Q_{k}(\mathbb{S}^n(1))=\frac{n+2+k}{2(n+2-k)}\omega_n\left(\frac{n+1-k}{\omega_n}\right)^{\frac{n+2-k}{n+1-}}.
\end{align*}
Consequently, we obtain the inequality \eqref{s4.KM3} in Theorem \ref{s4.thmRn} for smooth, star-shaped and $k$-convex hypersurface $\Sigma$ in $\mathbb{R}^{n+1}$. Moreover, if equality holds in \eqref{s4.KM3} on $\Sigma$, then $\frac{d}{dt}Q_{k}(t)=0$ holds on the solution $\Sigma_t$ of the flow \eqref{s4.flow1} for all time $t$ which means  that the initial hypersurface $\Sigma$ is a coordinate sphere.
\endproof

\subsection{Inequalities in the hyperbolic space}
In this subsection, we prove a similar inequality for bounded static convex domain $\Omega$ in the hyperbolic space $\mathbb{H}^{n+1}$. Here we say that $\Omega$ is static convex if  the second fundamental form of the boundary $\Sigma=\partial\Omega$ satisfies
\begin{equation*}
h_{ij}>\frac{u}{\lambda'}g_{ij}>0
\end{equation*}
everywhere on $\Sigma$. This definition was firstly introduced by Brendle and Wang \cite{BW14}, which implies strict convexity but is weaker than h-convexity since $u=\langle \lambda \partial_r,\nu\rangle <\lambda'$.

The following inverse curvature type flow
\begin{equation}\label{s4.flow2}
\frac {\partial X }{\partial t}=\left(\frac{E_{k-1}}{E_{k}} -\frac{u}{\lambda^{\prime}(r)}\right)\nu
\end{equation}
was studied by Scheuer and Xia \cite{SX19}. For initially star-shaped and $k$-convex hypersurface $\Sigma$ in $\mathbb{H}^{n+1}$, they proved that the solution $\Sigma_t$ of the flow \eqref{s4.flow2} remains star-shaped and $k$-convex, $\Sigma_t$ exists for all time and converges smoothly to a geodesic sphere centered at the origin as $t\to\infty$. Later, Hu and Li \cite{HL22} showed that if the initial hypersurface $M$ is static-convex, then the solution $\Sigma_t$ is static-convex for all $t>0$. As an application, the following inequality for static convex domain $\Omega\subset \mathbb{H}^{n+1}$ was proved in \cite{HL22}:
\begin{equation}\label{ineq-HL}
\int_{\partial\Omega}\lambda'E_k d\mu\geq h_{k}\circ \chi_\ell^{-1} (W_\ell (\Omega)),
\end{equation}
for $k=0,1,\cdots,n$, $\ell=0,1,\cdots,k$, where $h_k(r):=\int_{\partial B(r)}\lambda'E_k d\mu$ and $\chi_\ell(r):=W_\ell(B(r))$.

In the following, we prove the following sharp inequality for static convex domain.
\begin{theorem}\label{s4.thmHn}
If $\Omega$ is a smooth bounded static convex domain in $\mathbb{H}^{n+1} (n\geq 2)$, then
\begin{equation}\label{ineq2}
\int_{\partial \Omega} \Phi E_{k} d \mu+kW_{k-1}(\Omega) \geq \left(\xi_k+k\chi_{k-1}\right)\circ\chi_{\ell}^{-1}\left(W_{\ell}(\Omega)\right)
\end{equation}
for all $k=1, \cdots, n$ and $\ell=0,1,\cdots,k$, where $\Phi(r)=\cosh r-1$ as defined in \eqref{s4.Phi}. The equality holds if and only if $\Omega$ is a geodesic ball centered at the origin. Here $\xi_k:[0,\infty)\to\mathbb R_+$ is a function defined by the weighted curvature integral $\xi_k(r):=\int_{\partial B(r)} \Phi E_{k} d \mu$ on the geodesic ball $B(r)$ of radius $r$, and $\chi_\ell: [0,\infty)\to \mathbb R_+$ is defined by $\chi_\ell(r):=W_\ell(B(r))$.
\end{theorem}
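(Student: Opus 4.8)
The plan is to mirror the Euclidean proof of Theorem~\ref{s4.thmRn}, with the scaling-invariant ratio used there replaced by a difference against the explicit round-ball profiles $\xi_k$ and $\chi_j$, since $\mathbb{H}^{n+1}$ carries no dilation symmetry. We evolve $\Sigma=\partial\Omega$ by the locally constrained inverse curvature flow~\eqref{s4.flow2} of Scheuer--Xia. By \cite{SX19} (for star-shapedness and $k$-convexity) and \cite{HL22} (for static convexity), the flow exists for all $t\in[0,\infty)$, keeps $\Sigma_t$ star-shaped, $k$-convex and static convex, and converges smoothly and exponentially to a geodesic sphere $\partial B(r_\infty)$ centred at the origin, where $r_\infty$ is determined by $\Omega$. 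Note that origin-centred geodesic spheres are exactly the stationary solutions of~\eqref{s4.flow2}: on $\partial B(r)$ one has $u=\lambda(r)$ and $E_{k-1}/E_k=\lambda(r)/\lambda'(r)$, so the speed $E_{k-1}/E_k-u/\lambda'$ vanishes. This is why the family of round balls is the natural comparison object and why $\chi_\ell^{-1}(W_\ell(\Omega_t))$ is the correct ``radius'' against which to test.

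First I would record the two evolution identities along~\eqref{s4.flow2}. Taking $f=E_{k-1}/E_k-u/\lambda'$ in Lemma~\ref{s4.lem1} gives
\begin{equation*}
\frac{d}{dt}\left(\int_{\Sigma_t}\Phi E_k\,d\mu_t+kW_{k-1}(\Omega_t)\right)
=\int_{\Sigma_t}\bigl((k+1)uE_k+(n-k)\Phi E_{k+1}\bigr)\left(\frac{E_{k-1}}{E_k}-\frac{u}{\lambda'}\right)d\mu_t,
\end{equation*}
while~\eqref{s4.Wkevl} gives $\frac{d}{dt}W_\ell(\Omega_t)=\int_{\Sigma_t}\bigl(E_{k-1}/E_k-u/\lambda'\bigr)E_\ell\,d\mu_t$. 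Expanding the first right-hand side, estimating $\Phi E_{k+1}E_{k-1}/E_k\le\Phi E_k$ by the Newton--MacLaurin inequality~\eqref{s4.Newt}, and converting the $uE_{k-1}$ term via the Minkowski formula~\eqref{s5.Mink} (together with its lower-order analogues and the identities relating $\int_\Sigma E_j\,d\mu$ to quermassintegrals), one rewrites the right-hand side purely in terms of $\int_{\Sigma_t}\Phi E_j\,d\mu_t$ and $W_j(\Omega_t)$; as in the chain~\eqref{s4.pf1}, the correction term $-u/\lambda'$ in the speed is precisely what makes this algebra collapse.

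The crux is to prove that
\begin{equation*}
Q_k(t):=\int_{\Sigma_t}\Phi E_k\,d\mu_t+kW_{k-1}(\Omega_t)-\bigl(\xi_k+k\chi_{k-1}\bigr)\circ\chi_\ell^{-1}\bigl(W_\ell(\Omega_t)\bigr)
\end{equation*}
is non-increasing in $t$. Applying the same variational formulas to radial variations of geodesic spheres gives $\chi_\ell'(r)=\int_{\partial B(r)}E_\ell\,d\mu$ and $(\xi_k+k\chi_{k-1})'(r)=\int_{\partial B(r)}\bigl((k+1)uE_k+(n-k)\Phi E_{k+1}\bigr)d\mu$, so that after the chain rule (using $(\chi_\ell^{-1})'=1/\chi_\ell'$) the inequality $Q_k'(t)\le 0$ reduces to a single integral inequality on $\Sigma_t$ whose coefficients are the corresponding quantities evaluated on the model sphere $\partial B(\rho_t)$ with $\rho_t:=\chi_\ell^{-1}(W_\ell(\Omega_t))$. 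I expect to close this by combining (i) the static convexity bound $\kappa_i>u/\lambda'$, which controls the sign and size of the speed $f$, (ii) the Newton--MacLaurin inequalities, and (iii) a sharp weighted inequality of Kwong--Miao type (\eqref{s4.KM2}, \cite{kwong2014new,KM15}), respectively the Hu--Li inequality~\eqref{ineq-HL}, which forces the coefficient comparison in the right direction with equality only on origin-centred geodesic spheres; the cases $\ell<k$ may alternatively be deduced from $\ell=k-1$ via the Alexandrov--Fenchel inequalities. This differential inequality is the main obstacle: without a dilation symmetry one cannot reduce to a scale-invariant quantity as in Theorem~\ref{s4.thmRn}, and one must instead verify that, among admissible hypersurfaces with a prescribed value of $W_\ell$, the origin-centred geodesic sphere extremises $\int_\Sigma\Phi E_k\,d\mu+kW_{k-1}(\Omega)$.

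Finally, since $\Sigma_t\to\partial B(r_\infty)$ smoothly, every term of $Q_k(t)$ converges to its value on $\partial B(r_\infty)$; in particular $\chi_\ell^{-1}(W_\ell(\Omega_t))\to r_\infty$, whence
\begin{equation*}
\lim_{t\to\infty}Q_k(t)=\xi_k(r_\infty)+k\chi_{k-1}(r_\infty)-\bigl(\xi_k+k\chi_{k-1}\bigr)(r_\infty)=0.
\end{equation*}
Monotonicity then gives $Q_k(0)\ge 0$, which is exactly~\eqref{ineq2}. In the equality case, $Q_k(0)=0$ together with $Q_k'\le0$ and $Q_k(\infty)=0$ forces $Q_k'(t)\equiv0$, hence equality in every pointwise inequality used above for all $t$; equality in Newton--MacLaurin makes each $\Sigma_t$ totally umbilic and equality in the Kwong--Miao/Hu--Li inequality then forces it to be a geodesic sphere centred at the origin, so $\Omega$ is such a ball.
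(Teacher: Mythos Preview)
Your overall strategy (run the Scheuer--Xia flow~\eqref{s4.flow2}, exhibit a monotone quantity, pass to the spherical limit) matches the paper, but the mechanism you propose for the crucial monotonicity step does not work in $\mathbb{H}^{n+1}$, and the paper's argument is in fact organised differently and more simply.

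First, the paper does \emph{not} package the two sides into a single $Q_k(t)$. It proves two separate monotonicities along~\eqref{s4.flow2}: that $\int_{\Sigma_t}\Phi E_k\,d\mu_t+kW_{k-1}(\Omega_t)$ is non-increasing, and that $W_\ell(\Omega_t)$ is non-decreasing for each $\ell\le k$. Since $\xi_k+k\chi_{k-1}$ and $\chi_\ell$ are increasing, these combine at the limit $r_\infty$ to give~\eqref{ineq2}. Your combined quantity $Q_k(t)$ is of course then also monotone, but proving its monotonicity directly, as you propose, requires comparing evolution rates against model-sphere coefficients and is strictly harder than the paper's two separate checks.

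Second, and more importantly, the tools you list for the main step do not transfer from $\mathbb{R}^{n+1}$. In the Euclidean proof (Theorem~\ref{s4.thmRn}) one uses $\lambda'\equiv 1$, so the Minkowski formula turns $\int_\Sigma uE_{k-1}$ into $\int_\Sigma E_{k-2}=(n+2-k)W_{k-1}$, a genuine quermassintegral; then Kwong--Miao~\eqref{s4.KM2} closes the argument. In $\mathbb{H}^{n+1}$ one has $\lambda'=\cosh r$, so $\int_\Sigma uE_{k-1}=\int_\Sigma(\cosh r)E_{k-2}$ is \emph{not} a quermassintegral, your promised reduction ``purely in terms of $\int\Phi E_j$ and $W_j$'' fails, and~\eqref{s4.KM2} is stated only for Euclidean space. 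The paper instead keeps the speed in the form $\frac{u}{\lambda'}(\lambda'E_{k-1}-uE_k)+\frac{\Phi}{\lambda'}(\lambda'E_k-uE_{k+1})$ after Newton--MacLaurin, recognises each bracket as $\mathrm{div}(T_{j}\nabla\Phi)$ via Lemma~\ref{lem1}, integrates by parts, and then computes $\nabla(u/\lambda')=(\lambda')^{-2}(\kappa_i\lambda'-u)\nabla_i\Phi$ and $\nabla(\Phi/\lambda')=(\lambda')^{-2}\nabla\Phi$. Static convexity $\kappa_i\lambda'>u$ enters \emph{here}, to make the resulting quadratic forms in $\nabla\Phi$ non-negative, not (as you suggest) to control the sign of the speed $f$. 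The same integration-by-parts trick gives $\frac{d}{dt}W_\ell\ge 0$. This divergence/integration-by-parts mechanism is the missing idea in your proposal.
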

\proof
Along the flow \eqref{s4.flow2}, we have
\begin{align*}
  \frac{d}{dt}\biggl(\int_{\partial \Omega_t} \Phi E_{k} d \mu_t+kW_{k-1}(\Omega_t) \biggr)=& \int_{\partial\Omega_t}\left((k+1)uE_k+(n-k)\Phi E_{k+1}\right)\times \left(\frac{E_{k-1}}{E_{k}} -\frac{u}{\lambda^{\prime}(r)}\right)\\
= & (k+1)\int_{\partial\Omega_t}\frac{u}{\lambda'}\left(\lambda'E_{k-1}-uE_k\right)\\
&\quad +(n-k)\int_{\partial\Omega_t}\frac{\Phi}{\lambda'}\left(\lambda'\frac{E_{k+1}E_{k-1}}{E_k}-uE_{k+1}\right)\\
\leq & (k+1)\int_{\partial\Omega_t}\frac{u}{\lambda'}\left(\lambda'E_{k-1}-uE_k\right)\\
&\quad +(n-k)\int_{\partial\Omega_t}\frac{\Phi}{\lambda'}\left(\lambda'{E_k}-uE_{k+1}\right)\\
= &\frac{k+1}{k}\binom{n}{k}^{-1}\int_{\partial\Omega_t}\frac{u}{\lambda'}\mathrm{div}(T_{k-1}\nabla\Phi)\\
&\quad +\frac{n-k}{k+1}\binom{n}{k+1}^{-1}\int_{\partial\Omega_t}\frac{\Phi}{\lambda'}\mathrm{div}(T_{k}\nabla\Phi)\\
=&-\frac{k+1}{k}\binom{n}{k}^{-1}\int_{\partial\Omega_t}\left\langle T_{k-1}(\nabla\Phi),\nabla\left(\frac{u}{\lambda'}\right)\right\rangle \\
 &\quad -\frac{n-k}{k+1}\binom{n}{k+1}^{-1}\int_{\partial\Omega_t}\left\langle T_{k}(\nabla\Phi),\nabla\left(\frac{\Phi}{\lambda'}\right)\right\rangle,
\end{align*}
where we used Newton-MacLaurin inequality \eqref{s4.Newt}, Lemma \ref{lem1} and integration by parts. To estimate the sign of the right hand side, we note that
\begin{align*}
  \nabla_i\left(\frac{u}{\lambda'}\right) =& \frac{\nabla_iu}{\lambda'}-\frac{u}{(\lambda')^2}\nabla_i\lambda'=\frac{\nabla_i\Phi}{(\lambda')^2}(\kappa_i\lambda'-u), \\
   \nabla\left(\frac{\Phi}{\lambda'}\right)=& \frac{\nabla_i\Phi}{\lambda'}-\frac{\Phi}{(\lambda')^2}\nabla_i\lambda'=\frac{\nabla_i\Phi}{(\lambda')^2}.
\end{align*}
Since $\partial\Omega_t$ is static-convex, we have $\kappa_i\lambda'-u>0$ and both $T_{k-1}, T_k$ are positively definite. It follows that
\begin{equation*}
\frac{d}{dt}\biggl(\int_{\partial \Omega_t} \Phi E_{k} d \mu_t+kW_{k-1}(\Omega_t) \biggr)\leq 0
\end{equation*}
along the flow \eqref{s4.flow2}.

On the other hand, for $\ell\leq k$,
\begin{align*}
\frac{d}{dt}W_\ell(\Omega_t) =& \int_{\partial \Omega_t}E_\ell \left(\frac{E_{k-1}}{E_{k}} -\frac{u}{\lambda^{\prime}(r)}\right) \\
\geq &\int_{\partial \Omega_t}\frac{1}{\lambda^{\prime}}\left(\lambda' E_{\ell-1}-E_\ell u\right)\\
=&\int_{\partial \Omega_t}\frac{1}{\ell\lambda^{\prime}}\binom{n}{\ell}^{-1}\mathrm{div}(T_{\ell-1}\nabla\Phi)\\
=&\int_{\partial \Omega_t}\frac{1}{\ell(\lambda^{\prime})^2}\binom{n}{\ell}^{-1}\langle T_{\ell-1}(\nabla\Phi), \nabla \Phi\rangle ~\geq 0,
\end{align*}
where we used Newton-MacLaurin inequalities $E_\ell E_{k-1}\geq E_{\ell-1}E_k$ for $\ell\leq k$, Lemma \ref{lem1} and integration by parts.

Now we apply the above monotone properties and the convergence of the flow \eqref{s4.flow2}. Denote the limit coordinate sphere by $S_{r_\infty}=\partial\Omega_\infty$. Then
\begin{align*}
  \int_{\Sigma}\Phi E_k d\mu +kW_{k-1}(\Omega_t)\geq & \lim_{t\to\infty}\biggl(\int_{\partial \Omega_t} \Phi E_{k} d \mu_t+kW_{k-1}(\Omega_t) \biggr) \\
=&\int_{S_{r_\infty}}\Phi E_k d\mu_\infty+kW_{k-1}(\Omega_\infty)\\
=&\xi_k(r_\infty)+k\chi_{k-1}(r_\infty),\\
W_{\ell}(\Omega) \leq &\lim_{t\to\infty}W_\ell(\Omega_t)=W_\ell(\Omega_\infty)=\chi_\ell(r_\infty).
\end{align*}
As in \cite{WZ}, both functions $\xi_k:\mathbb{R}_+\to \mathbb{R}_+$ and $\chi_\ell:\mathbb{R}_+\to \mathbb{R}_+$ are strictly increasing. It follows that
\begin{equation}\label{s4.Hnpf1}
\int_{\Sigma}\Phi E_k d\mu +kW_{k-1}(\Omega)\geq \left(\xi_k+k\chi_{k-1}\right)\circ\chi_{\ell}^{-1}\left( W_{\ell}(\Omega)\right).
\end{equation}
The rigidity of \eqref{s4.Hnpf1} follows from the rigidity of the monotonicity. This completes the proof of Theorem \ref{s4.thmHn}.
\endproof

\begin{remark}
Since by definition,
\begin{equation*}
\int_{\partial\Omega}E_k d\mu=(n-k)W_{k+1}(\Omega)+kW_{k-1}(\Omega),
\end{equation*}
the left hand side of \eqref{ineq2} can be rewritten as
\begin{equation*}
\int_{\partial\Omega}\lambda'E_k d\mu-(n-k)W_{k+1}(\Omega).
\end{equation*}
This implies that our inequality \eqref{ineq2} is more sharp than the inequality \eqref{ineq-HL}.
\end{remark}

\subsection{Inequality in the sphere}
We can prove a similar inequality as in \eqref{ineq2} for convex hypersurfaces in the sphere $\mathbb{S}^{n+1}$ ($n\geq 2$) for the case $k=n$.
\begin{theorem}\label{s4.thmSn}
Let $\Sigma$ be a smooth, closed and strictly convex hypersurface in $\mathbb{S}^{n+1}$ ($n\geq 2$) enclosing a bounded domain $\Omega$. Then
\begin{align}\label{s4.eq3}
\int_{\Sigma} \Phi E_n(\kappa)d\mu +nW_{n-1}(\Omega)\geq \left(\xi_n+n\chi_{n-1}\right)\circ\chi_{\ell}^{-1}(W_\ell(\Omega)),\quad \ell=0,1,\cdots,n,
\end{align}
where $\Phi(r)=1-\cos r$ is the function defined in \eqref{s4.Phi}. Equality holds in \eqref{s4.eq3} if and only if $\Sigma$ is a coordinate sphere.
\end{theorem}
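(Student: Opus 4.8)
The plan is to mimic the proof of the hyperbolic case (Theorem~\ref{s4.thmHn}), driven by a locally constrained inverse curvature flow adapted to the positive curvature $K=+1$. Since a closed strictly convex hypersurface of $\mathbb{S}^{n+1}$ lies in an open hemisphere, we place the origin so that $\Sigma=\partial\Omega$ is star-shaped with $\lambda'(r)=\cos r>0$ on it, and evolve $\Sigma$ by
\begin{equation*}
\frac{\partial X}{\partial t}=\left(\frac{\lambda'(r)\,E_{n-1}(\kappa)}{E_{n}(\kappa)}-u\right)\nu,\qquad u=\langle\overline{\nabla}\Phi,\nu\rangle,\ \ \Phi(r)=1-\cos r,
\end{equation*}
the spherical analogue of \eqref{s4.flow2} (rescaled by $\lambda'$ so the speed stays well defined inside the hemisphere); coordinate spheres centered at the origin are stationary. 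Step~1 is to invoke the long-time existence of this flow for strictly convex (equivalently $n$-convex) data, the preservation of strict convexity, and the smooth convergence of $\Sigma_t$ to a geodesic sphere $S_{r_\infty}$ centered at the origin --- the $\mathbb{S}^{n+1}$-counterpart of the convergence results used in the Euclidean and hyperbolic cases. This is exactly what restricts the argument to $k=n$: then the term $(n-k)\,\Phi E_{k+1}f$ in the variational identity \eqref{s4.evl-PhiEk} vanishes, and the corresponding flow analysis is available.

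Step~2 consists of two monotonicity computations. With $f=\lambda'E_{n-1}/E_n-u$, Lemma~\ref{s4.lem1} for $k=n$ together with Lemma~\ref{lem1} (so that $\lambda'E_{n-1}-uE_n=\tfrac{1}{n}\,\mathrm{div}(T_{n-1}\nabla\Phi)$), integration by parts on the closed $\Sigma_t$, and the identity $\nabla u=h(\nabla\Phi)$ give
\begin{equation*}
\frac{d}{dt}\left(\int_{\Sigma_t}\Phi E_n\,d\mu_t+nW_{n-1}(\Omega_t)\right)
=\frac{n+1}{n}\int_{\Sigma_t}u\,\mathrm{div}(T_{n-1}\nabla\Phi)\,d\mu_t
=-\frac{n+1}{n}\int_{\Sigma_t}\langle h(\nabla\Phi),\,T_{n-1}(\nabla\Phi)\rangle\,d\mu_t\le 0,
\end{equation*}
since $h$ and $T_{n-1}$ are commuting positive operators on a strictly convex $\Sigma_t$; equality forces $\nabla\Phi\equiv0$ on $\Sigma_t$, i.e.\ $\Sigma_t$ is a coordinate sphere. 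Likewise, inserting $f$ into \eqref{s4.Wkevl} and using the Newton--MacLaurin inequality $E_\ell E_{n-1}\ge E_{\ell-1}E_n$ for $1\le\ell\le n$, Lemma~\ref{lem1}, and $T_{\ell-1}\ge0$ gives $\frac{d}{dt}W_\ell(\Omega_t)\ge\frac{1}{\ell\binom{n}{\ell}}\int_{\Sigma_t}\mathrm{div}(T_{\ell-1}\nabla\Phi)\,d\mu_t=0$; thus $W_\ell(\Omega_t)$ is nondecreasing, and for $\ell=n$ the estimate is an equality, so $W_n(\Omega_t)\equiv W_n(\Omega)$ is in fact conserved.

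Step~3 passes to the limit. For $1\le\ell\le n$, the monotonicity and $\Sigma_t\to S_{r_\infty}$ give $\int_\Sigma\Phi E_n\,d\mu+nW_{n-1}(\Omega)\ge\xi_n(r_\infty)+n\chi_{n-1}(r_\infty)$ and $W_\ell(\Omega)\le\chi_\ell(r_\infty)$. A short computation from the spherical Gauss--Bonnet identity $\int_{\partial B_r}E_n\,d\mu=\omega_n\cos^n r$ yields the closed form $\xi_n(r)+n\chi_{n-1}(r)=\omega_n(1-\cos^{n+1}r)$, strictly increasing on $(0,\pi/2)$, while $\chi_\ell'(r)=\omega_n\cos^\ell r\,\sin^{n-\ell}r>0$ on $(0,\pi/2)$; hence $\chi_\ell^{-1}(W_\ell(\Omega))\le r_\infty$ and monotonicity of $\xi_n+n\chi_{n-1}$ gives \eqref{s4.eq3} for $\ell=1,\dots,n$. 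The remaining case $\ell=0$ follows from the $\ell=1$ case together with the isoperimetric inequality in $\mathbb{S}^{n+1}$ (which yields $\chi_0^{-1}(W_0(\Omega))\le\chi_1^{-1}(W_1(\Omega))$). For the rigidity statement, equality in \eqref{s4.eq3} forces $\frac{d}{dt}(\int_{\Sigma_t}\Phi E_n+nW_{n-1}(\Omega_t))\equiv0$, hence $\Sigma_t$ is a coordinate sphere for all $t$ and in particular $\Sigma$ is; the converse is immediate from the definitions of $\xi_n$ and $\chi_j$.

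The heart of the argument, and where the real work lies, is Step~1: long-time existence and convergence of this flow inside the \emph{compact} ambient $\mathbb{S}^{n+1}$. In contrast to $\mathbb{R}^{n+1}$ and $\mathbb{H}^{n+1}$, one must prevent the flow from leaving a hemisphere or developing curvature singularities, and must show that strict convexity ($n$-convexity) is preserved --- that is, the full $C^0$, $C^1$, and curvature a priori estimates for the speed $\lambda'E_{n-1}/E_n-u$ --- and this is why only $k=n$ is treated. By contrast the monotonicity computations are essentially formal once Lemma~\ref{lem1} and the Newton--MacLaurin inequalities are in hand; the only subtlety there is the sign bookkeeping forced by $K=+1$, which is precisely what dictates using the flow displayed above (with $\lambda'$ multiplying $E_{n-1}/E_n$) rather than a verbatim copy of \eqref{s4.flow2}.
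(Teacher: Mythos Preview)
Your proposal is correct and follows essentially the same route as the paper: both use the Brendle--Guan--Li flow \eqref{s4.flow3} for $k=n$, the monotonicity of $\int_{\Sigma_t}\Phi E_n\,d\mu_t+nW_{n-1}(\Omega_t)$ (the paper's Lemma~\ref{s4.lem3}, of which your computation is the $k=n$ special case where the $(n-k)\Phi E_{k+1}$ term drops out), and the monotonicity of $W_\ell(\Omega_t)$ (which the paper cites from \cite[Lemma~5.1]{WZ}, while you reprove it directly for $\ell\ge1$ and handle $\ell=0$ via the spherical isoperimetric inequality). One small caveat: you cannot ``place the origin'' freely, since $\Phi$ and hence the left-hand side of \eqref{s4.eq3} depend on it; the tacit hypothesis here, in the paper, and in \cite{BGL} is that $\Sigma$ already lies in the open hemisphere about the fixed origin.
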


To prove \eqref{s4.eq3}, we consider the following flow in the sphere $\mathbb{S}^{n+1}$
\begin{equation}\label{s4.flow3}
\frac {\partial X }{\partial t}=\left(\frac{E_{k-1}}{E_{k}} \lambda^{\prime}(r)-u\right)\nu,
\end{equation}
which was introduced by Brendle, Guan and Li \cite{BGL}. If the initial hypersurface is strictly convex, it was proved in \cite{BGL} that the flow \eqref{s4.flow3} for $k=n$ in the sphere $\mathbb S^{n+1}$ preserves the strict convexity and converges smoothly to a geodesic sphere. The proof of \eqref{s4.eq3}  relies on this convergence result and the following monotone property.
\begin{lemma}\label{s4.lem3}
Assume that $\Sigma_{t}=\partial \Omega_t$ is a smooth closed and strictly convex solution to the flow \eqref{s4.flow3} in $\mathbb{S}^{n+1}$. We have that $\displaystyle \int_{\partial \Omega_{t}} \Phi E_{k}+k W_{k-1}\left(\Omega_{t}\right)$ is non-increasing in time $\displaystyle t$ and is strictly decreasing unless $\Sigma_{t}$ is a coordinate sphere.
\end{lemma}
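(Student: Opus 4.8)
The plan is to mirror the monotonicity computation in the proof of Theorem~\ref{s4.thmHn}, now with the flow speed $f=\frac{E_{k-1}}{E_k}\lambda'-u$ of \eqref{s4.flow3}. First I would apply Lemma~\ref{s4.lem1} with this choice of $f$ and expand the integrand, using
\[
\Big((k+1)uE_k+(n-k)\Phi E_{k+1}\Big)\Big(\tfrac{E_{k-1}}{E_k}\lambda'-u\Big)=(k+1)\,u\big(\lambda'E_{k-1}-uE_k\big)+(n-k)\,\Phi\Big(\tfrac{E_{k+1}E_{k-1}}{E_k}\lambda'-uE_{k+1}\Big).
\]
On the last term I would invoke the Newton--MacLaurin inequality \eqref{s4.Newt} in the form $\frac{E_{k+1}E_{k-1}}{E_k}\le E_k$ (legitimate since $E_k>0$ on a strictly convex hypersurface), together with $\Phi=1-\cos r\ge 0$ and $\lambda'=\cos r>0$, to bound it above by $(n-k)\,\Phi(\lambda'E_k-uE_{k+1})$; here $\lambda'>0$ because a closed strictly convex hypersurface of $\mathbb S^{n+1}$ lies in an open hemisphere, which may be taken centered at the origin and stays so along \eqref{s4.flow3} by \cite{BGL}. (When $k=n$ this term is simply absent and no use of \eqref{s4.Newt} or of $\lambda'>0$ is needed.)

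Next I would use Lemma~\ref{lem1}, which identifies $\lambda'E_{k-1}-uE_k=\frac{1}{k\binom{n}{k}}\mathrm{div}\big(T_{k-1}(\nabla\Phi)\big)$ and $\lambda'E_k-uE_{k+1}=\frac{1}{(k+1)\binom{n}{k+1}}\mathrm{div}\big(T_k(\nabla\Phi)\big)$, so that, after integrating by parts on the closed manifold $\Sigma_t$,
\[
\frac{d}{dt}\Big(\int_{\Sigma_t}\Phi E_k\,d\mu_t+kW_{k-1}(\Omega_t)\Big)\le -c_1\int_{\Sigma_t}\big\langle T_{k-1}(\nabla\Phi),\nabla u\big\rangle\,d\mu_t-c_2\int_{\Sigma_t}\big\langle T_k(\nabla\Phi),\nabla\Phi\big\rangle\,d\mu_t
\]
for explicit positive constants $c_1,c_2$ depending only on $n,k$ (with $c_2=0$ when $k=n$). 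The key structural input is the identity $\nabla_i u=h_i^{\,j}\nabla_j\Phi$, which follows from $\overline{\nabla}V=\lambda'\,\overline g$ (equivalently $V=\nabla\Phi+u\nu$ along $\Sigma_t$, then differentiate $u=\langle V,\nu\rangle$); thus $\nabla u=\mathcal A(\nabla\Phi)$ for the shape operator $\mathcal A=(h_i^j)$, and the first integrand becomes $\langle \mathcal A\,T_{k-1}(\nabla\Phi),\nabla\Phi\rangle$. Since $\Sigma_t$ is strictly convex, $\mathcal A$ and the Newton tensors $T_{k-1},T_k$ are simultaneously diagonalizable with positive eigenvalues, so $\mathcal A\,T_{k-1}$ and $T_k$ are positive definite; hence both integrals on the right are nonnegative and $\frac{d}{dt}\big(\int_{\Sigma_t}\Phi E_k+kW_{k-1}(\Omega_t)\big)\le 0$. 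For the case $k=n$ actually used in Theorem~\ref{s4.thmSn} one has directly $\mathcal A\,T_{n-1}=E_n\,\mathrm{Id}$, and the derivative equals $-\tfrac{n+1}{n}\int_{\Sigma_t}E_n|\nabla\Phi|^2\,d\mu_t\le 0$, which is immediate.

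For the rigidity statement, equality in the above forces $\langle T_k(\nabla\Phi),\nabla\Phi\rangle\equiv 0$ (resp.\ $E_n|\nabla\Phi|^2\equiv 0$ when $k=n$), and since $T_k>0$ (resp.\ $E_n>0$) this gives $\nabla\Phi\equiv 0$ on $\Sigma_t$; because $\nabla\Phi$ is the tangential part of $V=\lambda\partial_r$, the radial field $\partial_r$ is then everywhere normal to $\Sigma_t$, so $\Sigma_t$ is a coordinate sphere (when $k<n$ one additionally reads off equality in \eqref{s4.Newt}, i.e.\ that $\Sigma_t$ is umbilic, which is consistent). The main obstacle I expect is bookkeeping rather than conceptual: one must carefully track the combinatorial constants coming out of Lemma~\ref{lem1} so that the derivative is reorganized into the manifestly nonpositive form $-\sum_i c_i\int_{\Sigma_t}\langle(\text{positive operator})\nabla\Phi,\nabla\Phi\rangle$, and for $k<n$ one must justify that $\lambda'>0$ persists along the flow via \cite{BGL}; for the case $k=n$ needed in Theorem~\ref{s4.thmSn} these points do not arise.
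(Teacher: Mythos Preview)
Your proposal is correct and follows essentially the same route as the paper's proof: apply Lemma~\ref{s4.lem1} with $f=\frac{E_{k-1}}{E_k}\lambda'-u$, use the Newton--MacLaurin inequality \eqref{s4.Newt}, rewrite via Lemma~\ref{lem1} as divergences, integrate by parts with $\nabla u=A\nabla\Phi$, and conclude nonpositivity from the positive-definiteness of $AT_{k-1}$ and $T_k$ on a strictly convex hypersurface. Your added remark that the sign $\lambda'>0$ is needed for the Newton--MacLaurin step when $k<n$ (and is automatic when $k=n$, the only case used in Theorem~\ref{s4.thmSn}) is a point the paper passes over silently.
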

\begin{proof}
By choosing
$$
f=\frac{E_{k-1}}{E_{k}} \lambda^{\prime}(r)-u
$$
in \eqref{s4.evl-PhiEk} and using the Newton-MacLaurin inequality \eqref{s4.Newt}, we have
\begin{align*}
\frac{d}{dt}\left(\int_{\partial \Omega_{t}} \Phi E_k+k W_{k-1}\left(\Omega_{t}\right)\right)
=&\int_{\Sigma_{t}}(k+1) u\left(E_{k-1} \lambda^{\prime}(r)-u E_{k}\right)\\
&+\int_{\Sigma_{t}}(n-k) \Phi\cdot\left(E_{k+1} \frac{E_{k-1}}{E_{k}} \lambda^{\prime}(r)-u E_{k+1}\right)\\
\leq & \int_{\Sigma_{t}}(k+1) u\left(E_{k-1} \lambda^{\prime}(r)-u E_{k}\right)\\
&+\int_{\Sigma_{t}}(n-k) \Phi\cdot\left(E_{k} \lambda^{\prime}(r)-u E_{k+1}\right).
\end{align*}
By Lemma \ref{lem1},
\begin{align*}
\frac{d}{dt}\left(\int_{\partial \Omega_{t}} \Phi E_k+k W_{k-1}\left(\Omega_{t}\right)\right)
\le & \frac{(k+1)}{k\binom{n}{k}} \int_{\partial \Omega_{t}} u \mathrm{div}\left(T_{k-1}(\nabla \Phi)\right)+ \frac{(n-k)}{(k+1)\binom{n}{k+1}} \int_{\partial \Omega_{t}} \Phi \mathrm{div}\left(T_{k}(\nabla \Phi)\right)\\
= & -\frac{(k+1)}{k\binom{n}{k}} \int_{\partial \Omega_{t}} \langle T_{k-1}(\nabla \Phi), A\nabla \Phi \rangle - \frac{(n-k)}{(k+1)\binom{n}{k+1}} \int_{\partial \Omega_{t}} \langle T_{k}(\nabla \Phi), \nabla \Phi\rangle\\
= & - \int_{\partial \Omega_{t}}\left\langle \left(\frac{(k+1)}{k\binom{n}{k}}T_{k-1}A+\frac{(n-k)}{(k+1)\binom{n}{k+1}}T_k\right)(\nabla \Phi), \nabla \Phi \right\rangle
\end{align*}
where we have used the fact $\nabla u=A\nabla \Phi$.

If $\Sigma_{t}$ is convex, the operator
$\frac{(k+1)}{k\binom{n}{k}}T_{k-1}A+\frac{(n-k)}{(k+1)\binom{n}{k+1}}T_k$ is positive. Therefore,
$$ \frac{d}{d t} \left(\int_{\partial \Omega_{t}} \Phi E_k+k W_{k-1}\left(\Omega_{t}\right)\right)\le 0. $$
The inequality is strict unless $\nabla \Phi \equiv 0$ on $\Sigma_{t}$ which means that $\Sigma_{t}$ is a geodesic sphere centered at the origin.
\end{proof}

\begin{proof}[Proof of Theorem \ref{s4.thmSn}]
By Lemma 5.1 in \cite{WZ}, the $\ell$th quermassintegral $W_{\ell}\left(\Omega_t\right)$ is increasing along the flow \eqref{s4.flow3}.  Then Theorem \ref{s4.thmSn} follows by combining the convergence result of the flow \eqref{s4.flow3} for $k=n$ and the monotone property in Lemma \ref{s4.lem3}.
\end{proof}

\bibliographystyle{amsplain}

\end{document}